\newcommand{\commentout}[1]{}
\newcommand{\R}{\mathbb{R}}
\newcommand{\N}{\mathbb{N}}
\newcommand {\ep}  {\varepsilon}
\newcommand {\ffi}  {\varphi}
\newcommand {\Chi} {{\bf \raise 2pt \hbox{$\chi$}} }
\newcommand{\dd}{\mathrm{d}}
\renewcommand{\H}{\textnormal{H}}
\newcommand{\Id}{\textnormal{Id}}
\newcommand{\beq}{\begin{equation}}
\newcommand{\eeq}{\end{equation}}
\newcommand{\bea} {\begin{array}{rl}}
\newcommand{\eea} {\end{array}}
\newcommand{\bepa}{\left\{ \begin{array}{l}}
\newcommand{\eepa} {\end{array}\right.}
\newcommand{\din}{\dot{\,\in\,}}
\newcommand{\ddin}{\ddot{\,\in\,}}
\def\T{\mathbb{T}}
\def\N{\mathbb{N}}
\def\R{\mathbb{R}}
\def\H{\textnormal{H}}
\def\Id{\textnormal{Id}}
\newtheorem{lem}{Lemma}
\newtheorem{thm}{Theorem}
\newtheorem{Propo}{Proposition}
\newtheorem{definition}{Definition}
\newtheorem{remark}{Remark}
\newtheorem{question}{Question}
\title{From non-local to classical SKT systems :\\ triangular case with bounded coefficients}
                       \author{Ayman Moussa\footnote{Sorbonne-Universit\'e, Universit\'e Paris-Diderot SPC, CNRS, UMR 7598 Laboratoire Jacques-Louis Lions, F-75005 Paris, France}}
\begin{document}
\maketitle
\abstract{This paper solves partially a question suggested by Fontbona and Méléard. The issue is to obtain rigorously cross-diffusion systems \emph{à la} Shigesada-Kawasaki-Teramoto as the limit of relaxed systems in which the cross-diffusion and reaction coefficients are non-local. We depart from the existence result established by Fontbona and Méléard for a general class of non-local systems and study the corresponding asymptotic as the convolution kernels tend to Dirac masses, but only in the case of (strictly) triangular systems, with bounded coefficients. Our approach is based on a new result of compactness for the Kolmogorov equation, which is reminiscent of the celebrated duality lemma of Michel Pierre.}
\section{Introduction}
In 1979, Shigesada, Kawasaki and Teramoto introduced in \cite{Shigesada1979} the following system (that we denote SKT), on $Q_T:=[0,T]\times\Omega$ where $\Omega\subset\R^N$ is some regular bounded open set
\begin{align}
\label{SKT}\left\{
\begin{array}{l}
\partial_t u_1-\Delta \Big[(d_1+a_{11}u_1+a_{12}u_2)u_1\Big]= u_1(\rho_1-s_{11}u_1-s_{12}u_2),\\
\\
\partial_t u_2-\Delta \Big[(d_2+a_{22}u_2+a_{21}u_1)u_2\Big]= u_2(\rho_2-s_{21}u_1-s_{12}u_2).
\end{array}
\right.
\end{align}
The latter aims at describing the behavior of two populations (through their density functions $u_1,u_2\geq 0$) involving different mechanisms: self-diffusion ($a_{11}$, $a_{22}$ terms), cross-diffusion ($a_{12},a_{21}$ terms) and growth terms modelling reproduction ($\rho_1,\rho_2$ terms) or competition ($s_{ij}$ terms). The SKT system was first introduced because of the interesting properties of its steady states (it allows the formation of segregation patterns), see for instance \cite{MimYam,MimKaw,Kishimoto1985,Ni_et_al}. On the other hand the existence theory for the SKT system and its variants (multi-species, nonlinear diffusion or reaction coefficients \emph{etc}) has been puzzling the mathematical community for three decades now. To summarize, there exists two main strategies to establish existence results for these systems. The first strategy produces classical solutions using Amman's theorem (see \cite{amann88,amann90}) and leads  \emph{a priori} to local solution unless certain Sobolev norms are controlled. This first method allows the construction of global solutions for the SKT system under strong restrictions on the coefficients (see \cite{toan1,toan2,LouNiWU1998} or more recently \cite{HoanNguPha,lou2015global}). The second strategy uses the entropy structure of the SKT system, exhibited for the first time (in the 1D case) by Galiano \emph{et. al.} in \cite{Galiano_Num_Math} and then used to solve globally the SKT system in any dimension by Chen and Jüngel in \cite{Chen2004,Chen2006}. These last references can be seen as the end of the long road to the existence of global weak solutions for the SKT system, which was punctuated by several partial results \emph{e.g.} \cite{Yagi,Wang2005}. At the same time the discovery of this Lyapunov functionnal was a turning point: the entropy structure appeared to be robust enough to allow the elaboration of different approximation schemes for numerous variants of the SKT system (see \cite{chen2016global,dlmt,lepmou} and the reference therein).
 Let us also mention another tool, coming from reaction-diffusion theory, intensively used in some of the existing proofs of global existence. The duality lemma is an \emph{a priori} estimate inspired by the papers \cite{MaPi,PiSc} (see \cite{dualimpro} for improved versions), which allows in one go to justifiy the integrability of each of the nonlinearities of the system \eqref{SKT} (and is thus useful to handle concentration issues in the approximation process); together with the entropy estimate (allowing for gradient estimates and thus strong compactness) they form the cornerstone of the global existence results obtained in \cite{DesLepMou,dlmt,lepmou}. Let us finally mention a recent result of Chen and Jüngel concerning the uniqueness of weak solutions: \cite{unique}.

\vspace{2mm}

Apart from the two aforementioned approaches for the existence theory (regular solutions \emph{via} Amman's theorem, or weak solutions \emph{via} entropy method), another possibility (far less used in the literature) to prove existence of (weak) solutions to SKT systems is to realize these solutions as limit of simpler systems. For instance, it is possible to recover (see \cite{murakawa,condes,Mimura_cross,MimIz,ariane,DesTres} and the recent improvements \cite{estlaujun,dausdesdie}) the SKT system as the singular limit of a reaction-diffusion coupling in which one of the two populations exists in two states and passes from one to the other depending on the density of the other species. Not only this method produces a solution but it gives also some insight on the meaning of the cross-diffusion phenomenon, and can be seen as a way to justify this model. Another example using this asymptotic approach but yet from a completely different point of view is the article \cite{melfont} of Fontbona and Méléard. They managed to prove existence of a non-local version of the SKT model departing from a (finite) population process and studying its limit when the number of individuals tends to infinity. An interesting aspect of this work is that the tools involved are of a complete different nature when compared to all the literature that we mentionned previously: mostly probabilistic techniques. As a result of this first asymptotic, they obtain a global weak non-negative solution for the following SKT-type system, for $i\in\{1,\dots,M\}$
\begin{multline}
\label{syst:nonloc}\partial_t u_i = \frac{1}{2} \sum_{k,l=1}^d \partial_{x_kx_l}^2 (a_{k,l}^i(\cdot,G^{i1}\star u^1,\cdots,G^{iM}\star u^M)u^i) \\
- \sum_{k=1}^d \partial_{x_k} (b_{k}^i(\cdot,H^{i1}\star u^1,\cdots,H^{iM}\star u^M)u^i) + \left(r_i-\sum_{j=1}^M C^{ij}\star u^j\right)u^i,
\end{multline}
where the convolution $\star$ acts only on the space variable and the functions $a_{k,l}^i, b_k^i, G^{ij}, H^{ij}, C^{ij}$ are given regular functions, the unknown being the vector $(u_1,\dots,u_M)$. After establishing the uniqueness for a class of solutions of system \eqref{syst:nonloc}, Fontbona and Méléard study the limit $C^{ij}\rightharpoonup c_{ij} \delta$ where $c_{ij}\in\R$ and $\delta$ is the Dirac mass and prove (under adequate assumptions on the functions $a_{k,l}^i, b_k^i, G^{ij}, H^{ij}, C^{ij}$) that the corresponding solutions of \eqref{syst:nonloc} do converge to a solution of the following system for $i\in\{1,\dots,M\}$
\begin{multline}
\label{syst:nonlocbis}\partial_t u_i = \frac{1}{2} \sum_{k,l=1}^d \partial_{x_kx_l}^2 (a_{k,l}^i(\cdot,G^{i1}\star u^1,\cdots,G^{iM}\star u^M)u^i) \\
- \sum_{k=1}^d \partial_{x_k} (b_{k}^i(\cdot,H^{i1}\star u^1,\cdots,H^{iM}\star u^M)u^i) + \left(r_i-\sum_{j=1}^M c_{ij} u^j\right)u^i.
\end{multline}
Note that the SKT system can be seen as a particular case of the first system \eqref{syst:nonloc}, for which $b_k^i =0$, $a_{k,l}^i=\delta_{k,l}$ where $a^i$ is an affine function (independent of $x$) and each convolution kernel $G^{ij},H^{ij}, C^{ij}$ is replaced by a Dirac mass. Systems like \eqref{syst:nonloc} or \eqref{syst:nonlocbis} are often called \emph{non-local} or \emph{relaxed} cross-diffusion systems. The first naming (non-local) is obvious: some ponctual interactions of the SKT system are replaced by long-range ones, allowing the individuals of different species to compete or diffuse one another from far away, the distance of mutual interaction being measured by the diameter of the support of the different convolution kernels. The second (relaxed) naming refers to the mathematical stiffness of the system: it makes no doubt that the nonlinearities of system \eqref{syst:nonloc} or \eqref{syst:nonlocbis} are a lot more handable than the ones of the original SKT system \eqref{SKT}, because the convolution kernels are regular functions that smooth out the behaviour of the unknown. However the presence of these mollifying operators destroys completely the entropy structure discussed above for the SKT system and its variants:  up to now no generic Lyapunov functionnals have been exhibited for relaxed cross-diffusion systems. Another type of non-local cross-diffusion system was introduced in \cite{Lepoutre_JMPA} and investigated in \cite{LPR}: the convolution operator is replaced by $(\Id-\ep_i \Delta)^{-1}$ where $\ep_i>0$ is a small parameter. This elliptic regularization shares some properties with the convolution and is easier to manipulate when considering a boundary-value problem (which is the case in \cite{LPR}). No reaction terms are considered in \cite{LPR}, but the authors manage to prove existence and uniqueness of global and regular non-negative solutions for a wide family of relaxed cross-diffusion system. As it is the case for the convolution relaxation, the elliptic one forbids the use of the entropy machinery described earlier. For this reason, the mathematical analysis performed in \cite{LPR} is different from the known literature on global solutions for the SKT models and uses tools of a quite different nature that are generic parabolic estimates which are usually not available when considering the classical SKT systems because \emph{the latter cannot be written as parabolic equations with regular (or even) bounded coefficients}. The authors of \cite{LPR} makes also a clever use of the aforementionned duality estimate in the construction of their solutions. As precised earlier, no reaction terms (either local or not) are included in \cite{LPR}, but we can at the opposite mention several reaction-diffusion papers including non-local right hand sides (but no cross-diffusion terms) as for example \cite{coville,bernaper,genieys} and the references therein.

\vspace{2mm}

The results obtained in \cite{melfont} (that are: existence of solutions to \eqref{syst:nonloc} \emph{via} probabilistic methods then rigorous convergence of these solutions towards solutions of \eqref{syst:nonlocbis} as the reaction mollifications vanish) can be seen as the first step toward the full rigorous derivation of SKT-like models by an individual-based approach. Unfortunately Fontbona and Méléard did not manage to pass to the limit in the remaining convolution operators, that is studying the asymptotic $(G^{ij},H^{ij})\rightharpoonup (\delta,\delta)$ which, to quote them, appear to be a ``highly difficult open challenge''. The present paper is located right after this analysis and establishes the limit from this non-local system to the classical one but only in the case of \emph{triangular} SKT systems, without \emph{self-diffusion} assuming that the coefficients inside the diffusion operator are \emph{bounded}. More precisely we look at SKT relaxed systems having the following form
\begin{empheq}[left = \textnormal{(SKTR)}\quad\empheqlbrace]{align*}
&\partial_t u_1 - \Delta[a_1(\,\cdot\,,u_2\star\rho_2,u_3\star\rho_3,\dots,u_I\star\rho_I)u_1] = r_1(u_1\star \rho_1,\dots,u_M\star\rho_I)u_1,\\
&\partial_t u_2 - \Delta[a_2(\,\cdot\,,u_3\star\rho_3,\dots,u_I\star\rho_I)u_2] = r_2(u_1\star\rho_1,\dots,u_I\star\rho_I)u_2, \\
&\hspace{2mm}\vdots\\
&\partial_t u_I - \Delta[a_I u_I] = r_I(u_1\star\rho_1,\dots,u_I\star\rho_I)u_I, 
\end{empheq}
where $a_i$ and $r_i$ are continuous functions the first one being bounded from above and below by positive constants, and the second one only from above. Such a system is called (strictly) triangular because the diffusion coefficient for the $i$-th population depends upon the next $M-i$ other species only; in particular this coefficient does not depend on the population itself: no self-diffusion. We work on the periodic torus and do not pay to much assumption here to the question of global existence for (SKTR) (even though we include a result as a by-product of our analysis): we use the solutions built by Fontbona and Méléard on the whole space (the construction being similar on the torus). We recall that one could obtain likewise bounded solutions \emph{via} a PDE approach, following the lines of \cite{LPR} (a little effort is needed to include the reaction terms). Our main interest here is to study rigorously the limit $\rho_i \rightharpoonup \delta$, proving that the weak solutions of (SKTR) converge to a weak solution of the following one: 
\begin{empheq}[left = \empheqlbrace]{align*}
&\partial_t u_1 - \Delta[a_1(\,\cdot\,,u_2,u_3,\dots,u_I)u_1] = r_1(u_1,\dots,u_I)u_1,\\
&\partial_t u_2 - \Delta[a_2(\,\cdot\,,u_3,\dots,u_I)u_2] = r_2(u_1,\dots,u_I)u_2, \\
&\hspace{2mm}\vdots\\
&\partial_t u_I - \Delta[a_I u_I] = r_I(u_1,\dots,u_I)u_I.
\end{empheq}
Without uniqueness for the limit system, we only recover the convergence of a subsequence, by a compactness argument. At first sight, it is not obvious to exhibit (strong) compactness for a sequence of solutions to SKTR : no gradient estimates are available for such non-local cross-diffusion systems. We obtain this compactness property by a careful study of the \emph{Kolmogorov equation}, that is an equation of the form 
\begin{align} 
\label{intro:eq:kol1}
\partial_t z -\Delta(\mu z) &= G,\\
\label{intro:eq:kol2}z(0,\cdot) &=z^0,
\end{align}
where $\mu$, $G$ and $z^0$ are given and $z$ is the unknown. Each equations of the cross-diffusion systems above is a particular instance of the Kolmogorov equation. We consider a rather weak functional framework to solve \eqref{intro:eq:kol1} -- \eqref{intro:eq:kol2} in which no strong regularity assumptions is needed on $\mu$ : it is only assumed to be bounded from above and below (away from $0$). This low regularity forbids to use standard parabolic estimates for equations \eqref{intro:eq:kol1} ; it is consistent with the study of cross-diffusion systems like above since very few results of regularity are known for these systems, especially when one considers functions $a_i$ and $r_i$ which are merely continuous and an unbounded initial data, as we do.

\medskip

We propose in a dedicated section a self-contained exploration of the problem \eqref{intro:eq:kol1} -- \eqref{intro:eq:kol2}, when $\mu$ is bounded from above and below. We prove that under this assumption, the problem \eqref{intro:eq:kol1} -- \eqref{intro:eq:kol2} is well-posed, satisfies a weak maximum principle and also a strong stability property. This analysis is performed by using the \emph{dual problem} of \eqref{intro:eq:kol1} -- \eqref{intro:eq:kol2}, following the pioneer work of Martin, Pierre and Schmitt. We identify incidentally a non-trivial question : on which condition on $\mu$, uniqueness holds for the dual problem ? We give a list of sufficient conditions, the boundedness from above and below being one of them. In fact, it is precisely the well-posedness of the dual problem which allows to recover most of the useful properties that we exhibit on the Kolmogorov equation. Among these properties the aforementioned strong stability transfers $\textnormal{L}^1(Q_T)$ convergence on $\mu$ to $\textnormal{L}^2(Q_T)$ on $z$, under mere bounds for the source term $G$ and initial data $z^0$. Exploiting the uniqueness for the dual equation to prove a stability result on the Kolmogorov equation is one of the main innovation of this work.

\medskip

Once this strong type stability estimate is established for \eqref{intro:eq:kol1}, we translate it into a compactness result for a more general source term of the form $G=Rz$ (we failed to prove uniqueness for the corresponding modified Kolmogorov equation). Our strategy of proof to tackle the asymptotic limit $\rho_i\rightharpoonup \delta$ on the system (SKTR) is then to propagate compactness from one equation to the other. It is because of this trigger effect that our result applies to triangular systems (only) : we need strong compactness of \emph{one} of the populations to contaminate the other ones, one by one, the onset of this mechanism being given by the last equation (which is the less coupled in some sense). 

\medskip

Let's expose the structure of this work. In Section \ref{sec:state} we introduce some notations and state our main results. Section \ref{sec:dualsol} proposes a thorough study of the Kolmogorov equation, under a low regularity assumption for the diffusion coefficient ; this is done by studying the dual equation of \eqref{intro:eq:kol1}, that we introduce therein. The main result of this section is a stability result thanks to which we are able to prove our central compactness lemma. The proof of the latter is given at the beginning of Section~\ref{sec:proofth}. At this stage, only single equations were considered and the remaining part of Section~\ref{sec:proofth} aims at using these scalar results on systems. First, in Subsection \ref{subsec:conv} we prove the convergence of non-local triangular cross-diffusion systems to a classical one, when the kernels tend to Dirac measures. Then, in Subsection \ref{subsec:exi}, we prove an existence result for a large class of non-local triangular systems with continuous coefficients, to which our main result of convergence can be applied. In Section~\ref{sec:com} we discuss the uniqueness issue for the dual problem and explains how one can considerably enhance our main results by proving uniqueness for a larger class of diffusion coefficients. 
Finally, in the Appendix Section \ref{sec:app} we collect several technical results.
\section{Notations and main results}\label{sec:state}
\subsection{Notations}\label{sec:not}
In all this article $T>0$ and $N\in\N$ are fixed and $Q_T$ denotes the periodic parabolic cylinder $[0,T[\times \T^N$ . We denote by $\mathscr{D}(Q_T)$ the vector space of all smooth functions defined on $Q_T$ having a compact support and by $\mathscr{D}'(Q_T)$ its dual space.

\medskip

\noindent The norm of any normed vector space $X$ is denoted $\|\cdot\|_X$.

\medskip

\noindent For any sequence $(x_n)_n$ of a normed vector space $X$, $(x_n)_n\din X$ means ``$(x_n)_n$ is bounded in $x$'' and $(x_n)_n\ddin X$ means ``$(x_n)_n$ admits a converging subsequence in $X$''. Without more precisions, $(x_n)_n\ddin X$ refers to the topology induced by the norm of $X$.
\subsection{Statements}
Here is our main result
\begin{thm}\label{thm:nonloctoclas}
Consider two families of real-valued functions $(a_i)_{1\leq i\leq I}$ and $(r_i)_{1\leq i\leq I}$, where for all $i$ the function $a_i$ is defined on $\overline{Q_T}\times \R^{I-i}$ and $r_i$ is defined on $\R^I$. For all $n\geq 1$ fix a family of non-negative (and normalized) smooth kernels $(\rho_i^n)_{1\leq i\leq I}$ and consider a family of  non-negative functions $(u_{i}^n)_{1\leq i \leq I}\in\textnormal{L}^2(Q_T)$ solution of the following Cauchy problem on $Q_T$
\begin{empheq}[left = \empheqlbrace]{align*}
&\partial_t u_1^n - \Delta[a_1(\,\cdot\,,u_2^n\star\rho_2^n,u_3^n\star\rho_3^n,\dots,u_I^n\star\rho_I^n)u_1^n] = r_1(u_1^n\star\rho_1^n,\dots,u_I^n\star\rho_I^n)u_1^n,\\
&\partial_t u_2^n - \Delta[a_2(\,\cdot\,,u_3^n\star\rho_3^n,\dots,u_I^n\star\rho_I^n)u_2^n] = r_2(u_1^n\star\rho_1^n,\dots,u_I^n\star\rho_I^n)u_2^n, \\
&\hspace{2mm}\vdots\\
&\partial_t u_I^n - \Delta[a_I u_I^n] = r_I(u_1^n\star\rho_1^n,\dots,u_I^n\star \rho_I^n)u_I^n, 
\end{empheq}
with initial condition $u_i^n(0,\cdot) = u_i^{in} \in\textnormal{L}^2(\T^N)$, for all $i\in\llbracket 1,I\rrbracket$. Assume that the functions $a_i, r_i$ are continuous, with $a_i$ bounded from above and below by a positive constant and $r_i$ only from above. Assume furthermore that  $|r_i|$ is sub-affine, that is $|r_i(x_1,\cdots,x_I)|\lesssim 1+|x_1|+\cdots+|x_I|$. 

\vspace{2mm} 

If $(\rho_i^n)_n\rightharpoonup_n \delta$  (the Dirac mass) in $\mathscr{D}'(\T^N)$ for all $i$, then up to a subsequence, $(u_i^n)_{1\leq i\leq I}$ converges strongly in $\textnormal{L}^2(Q_T)$ to a solution $(u_i)_{1\leq i\leq I}$ of the following cross-diffusion system
\begin{empheq}[left = \empheqlbrace]{align*}
&\partial_t u_1 - \Delta[a_1(\,\cdot\,,u_2,u_3,\dots,u_I)u_1] = r_1(u_1,\dots,u_I)u_1,\\
&\partial_t u_2 - \Delta[a_2(\,\cdot\,,u_3,\dots,u_I)u_2] = r_2(u_1,\dots,u_I)u_2, \\
&\hspace{2mm}\vdots\\
&\partial_t u_I - \Delta[a_I u_I] = r_I(u_1,\dots,u_I)u_I, 
\end{empheq}
with the initial condition $u_i(0,\cdot) = u_i^{in}$, for all $1\leq i\leq I$. 
\end{thm}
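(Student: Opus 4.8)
The plan is to obtain uniform bounds, extract weak limits, upgrade them to \emph{strong} $\textnormal{L}^2(Q_T)$ convergence by a downward induction on the index $i$ that propagates compactness from the last equation to the first, and finally pass to the limit in every nonlinearity. To begin, I would establish uniform a priori bounds. Each $u_i^n$ is non-negative, and since $a_i$ is bounded above and below by positive constants, the diffusion coefficient $\mu_i^n:=a_i(\,\cdot\,,u_{i+1}^n\star\rho_{i+1}^n,\dots,u_I^n\star\rho_I^n)$ takes values in a fixed interval $[c,C]$ for every $n$, whatever the (possibly large) values of the $u_j^n$; likewise $R_i^n:=r_i(u_1^n\star\rho_1^n,\dots,u_I^n\star\rho_I^n)\le M$ for some constant $M$. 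Summing the $I$ equations, the quantity $U^n:=\sum_{i}u_i^n\ge 0$ solves $\partial_t U^n-\Delta(A^n U^n)=\sum_i R_i^n u_i^n\le M U^n$, where $A^n:=\big(\sum_i \mu_i^n u_i^n\big)/U^n\in[c,C]$ is a convex combination of the $\mu_i^n$. After the change of unknown $V^n:=e^{-Mt}U^n$, which turns this into the subsolution inequality $\partial_t V^n-\Delta(A^n V^n)\le 0$ with coefficient in $[c,C]$, the duality estimate of Section~\ref{sec:dualsol} yields $(V^n)_n\din\textnormal{L}^2(Q_T)$, hence $(U^n)_n\din\textnormal{L}^2(Q_T)$ and $(u_i^n)_n\din\textnormal{L}^2(Q_T)$ for each $i$, all uniformly in $n$. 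Up to a subsequence I may thus assume $u_i^n\rightharpoonup u_i$ weakly in $\textnormal{L}^2(Q_T)$.

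The heart of the argument is the central compactness lemma, whose hypotheses are $\textnormal{L}^1(Q_T)$ convergence of the diffusion coefficient and a mere upper bound on the reaction coefficient. I start with the last equation: there $\mu_I^n\equiv a_I$ is a fixed function — this is precisely where strict triangularity is used, since $a_I$ depends on no unknown — so it converges trivially in $\textnormal{L}^1(Q_T)$, and $R_I^n\le M$; the lemma gives $(u_I^n)_n\ddin\textnormal{L}^2(Q_T)$, so $u_I^n\to u_I$ strongly. For the inductive step, assume $u_j^n\to u_j$ strongly in $\textnormal{L}^2(Q_T)$ for all $j>i$. Convolution against an approximate identity preserves strong convergence: from $u_j^n\star\rho_j^n-u_j=(u_j^n-u_j)\star\rho_j^n+(u_j\star\rho_j^n-u_j)$, the bound $\|f\star\rho_j^n\|_{\textnormal{L}^2}\le\|f\|_{\textnormal{L}^2}$ (the $\rho_j^n$ being non-negative and normalized), and $\rho_j^n\rightharpoonup\delta$, one gets $u_j^n\star\rho_j^n\to u_j$ strongly in $\textnormal{L}^2(Q_T)$, hence a.e. along a further subsequence. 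By continuity of $a_i$ and the uniform bound $\mu_i^n\in[c,C]$, dominated convergence gives $\mu_i^n\to a_i(\,\cdot\,,u_{i+1},\dots,u_I)$ in $\textnormal{L}^1(Q_T)$; since $R_i^n\le M$, the compactness lemma applies to the $i$-th equation and yields strong convergence $u_i^n\to u_i$ in $\textnormal{L}^2(Q_T)$ (the limit being the weak limit already extracted). This closes the induction. Crucially, although the reaction coefficients $R_i^n$ couple \emph{all} species and are therefore not triangular, this does not obstruct the compactness step, which only needs them bounded above.

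It remains to pass to the limit. With every $u_i^n\to u_i$ strongly in $\textnormal{L}^2(Q_T)$, one also has $u_j^n\star\rho_j^n\to u_j$ strongly in $\textnormal{L}^2(Q_T)$ and a.e. In the diffusion term, $\mu_i^n$ is bounded in $\textnormal{L}^\infty$ and converges a.e. to $a_i(\,\cdot\,,u_{i+1},\dots,u_I)$ while $u_i^n\to u_i$ in $\textnormal{L}^2(Q_T)$, so $\mu_i^n u_i^n\to a_i(\,\cdot\,,u_{i+1},\dots,u_I)u_i$ in $\textnormal{L}^2(Q_T)$ and $\Delta(\mu_i^n u_i^n)$ converges in $\mathscr{D}'(Q_T)$. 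In the reaction term, continuity of $r_i$ gives $R_i^n\to r_i(u_1,\dots,u_I)$ a.e., and the sub-affine bound $|R_i^n|\lesssim 1+\sum_j|u_j^n\star\rho_j^n|$ supplies a dominating sequence converging in $\textnormal{L}^2(Q_T)$; combined with $u_i^n\to u_i$ in $\textnormal{L}^2(Q_T)$, a generalized dominated convergence argument yields $R_i^n u_i^n\to r_i(u_1,\dots,u_I)u_i$ in $\textnormal{L}^1(Q_T)$. Finally, each initial datum $u_i^n(0,\cdot)=u_i^{in}$ being fixed, the distributional formulation of the Kolmogorov equation transmits $u_i(0,\cdot)=u_i^{in}$ to the limit.

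I expect the main obstacle to be structural rather than computational. With no gradient estimates at hand, the only mechanism producing strong compactness is the stability/compactness lemma for the Kolmogorov equation, and its hypothesis — $\textnormal{L}^1$ convergence of $\mu_i^n$ — itself demands \emph{strong} convergence of the $u_j^n\star\rho_j^n$ for $j>i$. Strict triangularity is exactly what resolves this apparent circularity: it decouples the last equation, letting the induction start, after which compactness contaminates the system one equation at a time. The second delicate point is the passage to the limit in the genuinely nonlinear, only sub-affine reaction terms, handled through the $\textnormal{L}^2\times\textnormal{L}^2\to\textnormal{L}^1$ product structure.
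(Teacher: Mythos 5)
Your strategy is the same as the paper's: uniform $\textnormal{L}^2(Q_T)$ bounds via a comparison argument and the duality estimate, then a downward induction that propagates strong compactness from the $I$-th equation to the first by means of Lemma~\ref{lem:comprz}, then a weak--strong limit passage in the nonlinearities. The one structural variation is your derivation of the a priori bound by summing the $I$ equations. The paper instead compares each $u_i^n$ separately, via Proposition~\ref{propo:sublin}$(ii)$, with the dual solution $v_{i,n}$ of the homogeneous equation having the same diffusion coefficient, to which estimate \eqref{ineq:z} applies with zero source. Since every $a_i$ is already bounded above and below, summing buys nothing here, and it costs you something: the duality estimate of Theorem~\ref{thm:duaz} is stated for exact dual solutions, not for subsolutions, so ``$\partial_t V^n-\Delta(A^nV^n)\le 0$ plus duality'' is not a licensed step in this framework. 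The clean route is the paper's: apply Proposition~\ref{propo:sublin}$(ii)$ equation by equation, then \eqref{ineq:z} to each homogeneous majorant.

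Two points need repair. First, your assertion that the compactness step ``only needs [the reaction coefficients] bounded above'' misstates the hypotheses of Lemma~\ref{lem:comprz}, which also requires $(R_n)_n\din\textnormal{L}^2(Q_T)$. This is where sub-affineness enters the induction, not only the final limit passage: the contraction property $\|u_j^n\star\rho_j^n\|_{\textnormal{L}^2(Q_T)}\le\|u_j^n\|_{\textnormal{L}^2(Q_T)}$ together with the uniform $\textnormal{L}^2$ bounds gives $(r_i(\widetilde{U}_n))_n\din\textnormal{L}^2(Q_T)$; the ingredient is available to you, but it must be invoked at each application of the lemma. Second, before any of the dual-solution machinery (Proposition~\ref{propo:sublin}, Lemma~\ref{lem:comprz}) can be used, one must check that each $u_i^n$ is indeed the dual solution of its equation, i.e.\ that for each fixed $n$ the reaction coefficient $r_i(u_1^n\star\rho_1^n,\dots,u_I^n\star\rho_I^n)$ belongs to $\textnormal{L}^\infty(Q_T)$ and the right-hand side to $\textnormal{L}^2(Q_T)$; with only $u_i^n\in\textnormal{L}^2(Q_T)$ this is not automatic. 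The paper bootstraps: the equation gives $\partial_t u_i^n\in\textnormal{L}^1(0,T;\H^{-m}(\T^N))$ for some large $m$, the smoothness of the kernels then yields $u_i^n\star\rho_i^n\in\textnormal{L}^\infty(Q_T)$ (for fixed $n$, with no uniform bound), hence $r_i(u_1^n\star\rho_1^n,\dots,u_I^n\star\rho_I^n)\in\textnormal{L}^\infty(Q_T)$ by continuity. You skip this verification entirely. Both gaps are reparable, and the rest of your argument --- in particular the limit passage, where your generalized dominated convergence for the reaction term is a fine substitute for the paper's weak-times-strong product argument --- matches the paper.
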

\begin{remark}
Each equation of the above triangular systems is a Kolmogorov equation. The meaning of solution refers to Definition~\ref{def:dist}, which is given in the next section.
\end{remark}
\begin{remark}
When $I=2$, if $u_2^{in}$ is assumed to be bounded, one can get rid of the upper-bound condition on $a_1$ : $u_2$ is bounded by the maximum principle, and the continuity of $a_1$ is sufficient.
\end{remark}
\begin{remark}
  It will be clear from our proof that we could (as in \cite{melfont}) consider different kernels $(\rho_{ij}^n)_{1\leq i,j\leq I}$ for each population and even replace the reaction terms by $r_i(u_1^n\star \theta_{i1}^n,\dots,u_I^n\star \theta_{iI}^n)$ with a different sequence of kernels $(\theta_{ij}^n)$, and obtain the same result when letting all the $(\rho_{ij}^n)_{1\leq i,j\leq I}$ and all $(\theta_{ij}^n)_{1\leq i,j\leq I}$ go to the Dirac mass.
\end{remark}
In \cite{melfont} Fontbona and Méléard proved the above convergence only when the convolution kernels of the reaction terms go to the Dirac mass (the one inside the laplacian are fixed), under stronger regularity assumptions on both the $a_i$ and the $r_i$ functions, but including non-triangular cases and unbounded $a_i$ functions (with however some control on the growth). The domain considered in \cite{melfont} is the whole space space $\R^N$. However, if one has in mind the use of cross-diffusion systems for modeling in population dynamics, the most relevant setting - and the one in which the SKT system was written in the first place (see \cite{Shigesada1979}) - is the initial-boundary value problem (with homogeneous Neumann boundary conditions) on a smooth bounded domain $\Omega\subset\R^N$. Our method of proof for the torus would apply \emph{verbatim} for this more realistic framework, up to minor modifications. The choice of the periodic setting allows to grasp all the essence of the proof while avoiding the tedious task of defining a convolution operator on a domain having a non-empty boundary. Note however that our strategy works identically if one replaces the convolution operator by an elliptic regularization like the one proposed in \cite{LPR} (we chose here the convolution to stay within the framework of \cite{melfont} to which we aim at answering). As a matter of fact, each convolution operator could be replaced by a sequence of operators $A_n:\textnormal{L}^2(Q_T)\rightarrow\textnormal{L}^2(Q_T)$ approaching pointwisely the identity map and preserving strong convergence. Since this extra-generalization does not add any kind of difficulty in our analysis, we prefered to focus on a more digestible statement of our main theorem. 
Another possibility of generalization of our result is of course the case of the whole space $\R^N$ originally considered by Fontbona and Méléard for which one should replace the Poincaré-Wirtinger inequality that we use several times by Sobolev embeddings.

\vspace{2mm}

As detailed earlier, the strength of our result relies strongly on the low regularity of the functions $a_i,r_i$ and the initial data. For instance, the asymptotic study established in \cite{melfont} by Fontbona and Méléard assumes $\mathscr{C}^3$ regularity for the functions $a_i$ and Lipschiz initial data. To ensure that Theorem~\ref{thm:nonloctoclas} is not only an ``if-result'', one needs to prove the existence of the sequence $(u_i^n)_n$ above to make sure that the statement of Theorem~\ref{thm:nonloctoclas} is not empty. This is the purpose of the following result. Up to our knowledge, the current literature on cross-diffusion systems does not furnish a comparable global existence result for such low-regularity coefficients and initial data.
\begin{thm}\label{thm:exi}
Consider two families of continuous real-valued functions $(a_i)_{1\leq i\leq I}$ and $(r_i)_{1\leq i\leq I}$, where for all $i$ the function $a_i$ is defined on $\overline{Q_T}\times \R^{I-i}$ and $r_i$ is defined on $\R^I$. Assume that $a_i$ is bounded from above and below by a positive constant and $r_i$ only from above. Assume furthermore that  $|r_i|$ is sub-affine, that is $|r_i(x_1,\cdots,x_I)|\lesssim 1+|x_1|+\cdots+|x_I|$.  Fix a family of non-negative (and normalized) smooth kernels $(\rho_i)_{1\leq i\leq I}$ and consider a family of $\textnormal{L}^2(\T^N)$ non-negative functions $(u_i^{in})_{1\leq i \leq I}$. Then, there exists a family of $\textnormal{L}^2(Q_T)$ non-negative fuctions $(u_i)_{1\leq i\leq I}$ solution of the following Cauchy problem on $Q_T$
\begin{empheq}[left = \empheqlbrace]{align*}
&\partial_t u_1 - \Delta[a_1(\,\cdot\,,u_2\star\rho_2,u_3\star\rho_3,\dots,u_I\star\rho_I)u_1] = r_1(u_1\star\rho_1,\dots,u_I\star\rho_I)u_1,\\
&\partial_t u_2 - \Delta[a_2(\,\cdot\,,u_3\star\rho_3,\dots,u_I\star\rho_I)u_2] = r_2(u_1\star\rho_1,\dots,u_I\star\rho_I)u_2, \\
&\hspace{2mm}\vdots\\
&\partial_t u_I - \Delta[a_I u_I] = r_I(u_1\star\rho_1,\dots,u_I\star \rho_I)u_I, 
\end{empheq}
with initial condition $u_i(0,\cdot) = u_i^{in}$, for all $1\leq i\leq I$. 
\end{thm}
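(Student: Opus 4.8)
The plan is to produce the solution by a Schauder fixed-point argument in which each scalar equation is solved through the Kolmogorov theory of Section~\ref{sec:dualsol}, the nonlinear coupling being linearised by \emph{freezing} the convolved arguments of $a_i$ and $r_i$. The decisive feature, absent in the setting of Theorem~\ref{thm:nonloctoclas}, is that the kernels $\rho_i$ are here \emph{fixed} and smooth: by Young's inequality on $\T^N$ the map $w\mapsto w\star\rho_i$ sends $\textnormal{L}^2(Q_T)$ continuously into $\textnormal{L}^\infty(Q_T)$ and, more importantly, turns weakly convergent sequences into uniformly convergent ones. Composed with the continuity of $a_i$ and $r_i$, this will make the frozen coefficients depend continuously on the unknown and will let me pass to the limit in the nonlinearities.

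Concretely I would work in $K:=\{v=(v_i)\in\textnormal{L}^2(Q_T)^I : v_i\geq 0,\ \|v_i\|_{\textnormal{L}^2(Q_T)}\leq C\}$, a closed bounded convex subset of $\textnormal{L}^2(Q_T)^I$, and define $\Phi(v)=u$ by solving, for each $i$, the linear equation $\partial_t u_i-\Delta(\mu_i u_i)=R_i u_i$ with $u_i(0,\cdot)=u_i^{in}$, where $\mu_i:=a_i(\,\cdot\,,v_{i+1}\star\rho_{i+1},\dots,v_I\star\rho_I)$ and $R_i:=r_i(v_1\star\rho_1,\dots,v_I\star\rho_I)$. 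Since $a_i$ is continuous and pinched between two positive constants, $\mu_i$ is an admissible Kolmogorov coefficient; since $r_i$ is sub-affine, $R_i$ is bounded in $\textnormal{L}^\infty(Q_T)$ over $K$; and a fixed point of $\Phi$ is exactly a solution in the sense of Definition~\ref{def:dist}. The uniform a priori bound that closes the self-map $\Phi(K)\subseteq K$ is where the \emph{upper} bound on $r_i$ (and not merely its sub-affinity) is essential: writing $M$ for a global upper bound of the $r_i$ and substituting $u_i=e^{Mt}\omega_i$ turns the reaction into the \emph{dissipative} zeroth-order term $-(M-R_i)\omega_i$ with $M-R_i\geq 0$ \emph{uniformly} in $v$. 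This yields at once the weak maximum principle (hence $u_i\geq 0$ and the mass bound $\|u_i(t)\|_{\textnormal{L}^1}\leq e^{Mt}\|u_i^{in}\|_{\textnormal{L}^1}$) and, through the $\textnormal{L}^2$ stability of Section~\ref{sec:dualsol} applied to the $\textnormal{L}^2$ datum $u_i^{in}$, a bound $\|u_i\|_{\textnormal{L}^2(Q_T)}\leq C$ depending only on $\|u_i^{in}\|_{\textnormal{L}^2}$, $M$, $T$ and the bounds of $a_i$, crucially independent of $v$.

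Continuity and compactness of $\Phi$ are then supplied by the central compactness lemma proved at the beginning of Section~\ref{sec:proofth}. Given a sequence $v^{(k)}\in K$, I would extract a weak limit $v^{(k)}\rightharpoonup v$; by the smoothing property above, $v_j^{(k)}\star\rho_j\to v_j\star\rho_j$ uniformly, so $\mu_i(v^{(k)})\to\mu_i(v)$ and $R_i(v^{(k)})\to R_i(v)$ by continuity and dominated convergence, in particular in $\textnormal{L}^1(Q_T)$. The compactness lemma, which transfers this $\textnormal{L}^1$ convergence of the coefficients into strong $\textnormal{L}^2$ convergence of the corresponding solutions, then gives $\Phi(v^{(k)})\to\Phi(v)$ strongly in $\textnormal{L}^2(Q_T)^I$. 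Thus $\Phi$ is continuous and $\Phi(K)$ relatively compact, and Schauder's theorem furnishes a fixed point, that is a non-negative $\textnormal{L}^2$ solution, the non-negativity passing to the limit because it holds all along the construction.

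The hard part, I expect, will be the \emph{well-posedness}, and in particular the single-valuedness, of the frozen equation $\partial_t u_i-\Delta(\mu_i u_i)=R_i u_i$: this is the \emph{modified} Kolmogorov equation with source $G=R_iu_i$, for which uniqueness is delicate in the low-regularity regime (cf. the discussion of Section~\ref{sec:com}). Here it is rescued by the fact that, after the uniform shift by $M$, the added zeroth-order term has a favourable sign, so the dual estimates of Section~\ref{sec:dualsol} survive its addition; should one prefer to bypass this point entirely, one may replace Schauder's theorem by its set-valued counterpart (Kakutani--Fan--Glicksberg), which only requires the frozen problem to possess solutions, an unconditional fact, together with the same compactness input.
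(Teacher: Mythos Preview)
Your Schauder scheme rests on two claims about the spatial convolution $w\mapsto w\star\rho_i$ that are false in the present setting, and this makes the argument collapse. First, Young's inequality on $\T^N$ gives only
\[
\|(w\star\rho_i)(t,\cdot)\|_{\textnormal{L}^\infty(\T^N)}\leq \|\rho_i\|_{\textnormal{L}^2(\T^N)}\|w(t,\cdot)\|_{\textnormal{L}^2(\T^N)},
\]
so for $w\in\textnormal{L}^2(Q_T)$ you get $w\star\rho_i\in\textnormal{L}^2(0,T;\textnormal{L}^\infty(\T^N))$, \emph{not} $\textnormal{L}^\infty(Q_T)$: the convolution acts only in space and does nothing to control the time variable. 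Second, for the same reason, the map does not upgrade weak $\textnormal{L}^2(Q_T)$ convergence to uniform convergence; a sequence oscillating purely in $t$ (say $v^{(k)}(t,x)=\phi(x)\sin(kt)$) is untouched by $\star\rho_i$. Both of these claims are load-bearing: without $R_i\in\textnormal{L}^\infty(Q_T)$ the frozen equation $\partial_t u_i-\Delta(\mu_i u_i)=R_i u_i$ does not fit into the framework of Section~\ref{sec:dualsol} (the source $R_iu_i$ is only in $\textnormal{L}^1(Q_T)$, so $u_i$ is not a dual solution and Proposition~\ref{propo:sublin} is unavailable), and without the compactness of $v\mapsto\mu_i(v)$ in $\textnormal{L}^1(Q_T)$ you cannot feed Lemma~\ref{lem:comprz} to obtain either the compactness of $\Phi(K)$ or the continuity of $\Phi$. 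The sign trick after the shift by $M$ does not rescue uniqueness for the modified Kolmogorov equation either: the paper says so explicitly, and the dual estimate of Proposition~\ref{propo:duaphi} is obtained by testing against $\Delta\Phi$, which produces an uncontrolled cross term $\int c\,\Phi\,\Delta\Phi$ when a zeroth-order coefficient $c$ of low regularity is present.

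The paper's proof avoids this trap precisely by never freezing arbitrary $v\in\textnormal{L}^2(Q_T)$. It regularises $(a_i,r_i,u_i^{in})$ so that an external existence result (Fontbona--M\'el\'eard or \cite{LPR}) supplies smooth solutions $u_{i,n}$, and then the $\textnormal{L}^\infty(Q_T)$ bound on $u_{i,n}\star\rho_i$ is obtained \emph{from the equation}: the $\textnormal{L}^2(Q_T)$ bound on $u_{i,n}$ yields $\partial_t u_{i,n}\in\textnormal{L}^1(0,T;\H^{-m}(\T^N))$ uniformly, whence $u_{i,n}\star\rho_i\in\mathscr{C}^0([0,T];\mathscr{C}^\infty(\T^N))$ with a uniform bound. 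This is exactly the time-regularity input that your frozen $v$ lacks. If you want to salvage a fixed-point approach, you must build this time regularity into the domain $K$ (for instance working in a ball of $\textnormal{L}^2(Q_T)$ intersected with a bounded set of $\textnormal{W}^{1,1}(0,T;\H^{-m}(\T^N))$, checking that $\Phi$ preserves it), or else exploit the triangular structure to solve the equations sequentially rather than simultaneously.
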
 
   As explained in the introduction, our convergence result (Theorem~\ref{thm:nonloctoclas}) will be obtained step by step, starting with the convergence of $(u_I^n)_n$, we will prove the one of $(u_{I-1}^n)_n$, and then $(u_{I-2}^n)_n$ \emph{etc}. For this reason, almost all the difficulty of the asymptotic is concentrated when focusing on a sequence of scalar diffusion equations, 
\begin{align*}
\partial_t z_n - \Delta (\mu_n z_n) &= R_n z_n,\\
z_n(0,\cdot) &= z^0_n,
\end{align*}
where $z^0_n$ is non-negative, $\mu_n$ and $R_n$ are respectively uniformly positively bounded from below and above, the question being : on which (asymptotic) conditions on $(z^0_n,\mu_n,R_n)_n$ does $(z_n)_n$ enjoys strong compactness properties ? The next result answers (by means of sufficient conditions) to this question. It's a technical tool but its repeated use is at the core of the proof of the two previous theorems so we decided to add it among the main results, since we believe it could be useful in the study of many cross-diffusion systems. It gives sufficient asymptotic condition on $(z_n^0)_n$, $(\mu_n)_n$ and $(R_n)_n$ (namely $(z_n^0)_n\din\textnormal{L}^2(\T^N)$, $(\mu_n)_n\ddin\textnormal{L}^1(Q_T)$ and $(R_n)_n\din \textnormal{L}^2(Q_T)$) to get strong compactness for $(z_n)_n$. 
\begin{lem}\label{lem:comprz}
For all $n\in\N$ consider $z^0_n\in\textnormal{L}^2(\T^N)$ non-negative, two functions $\mu_n,R_n\in \textnormal{L}^\infty(Q_T)$ and $z_n\in\textnormal{L}^2(Q_T)$ a non-negative solution of
\begin{align*}
\partial_t z_n - \Delta(\mu_n z_n) &= R_n z_n,\\
z_n(0,\cdot) &= z^0_n.
\end{align*}
Assume that $\inf_n\inf_{Q_T} \mu_n>0$ and $\sup_n\sup_{Q_T} R_n<\infty$, that  $(z_n^0)_n\din\textnormal{L}^2(\T^N)$, $(R_n)_n\din\textnormal{L}^2(Q_T)$ and that $(\mu_n)_n\ddin\textnormal{L}^1(Q_T)$, with a bounded cluster point. Then one has $(z_n)_n\ddin\textnormal{L}^2(Q_T)$.
\end{lem}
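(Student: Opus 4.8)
The plan is to combine a uniform a priori estimate in $\textnormal{L}^2(Q_T)$, extracted from the duality estimate, with the strong stability property already established for the Kolmogorov equation. For the bound, set $K:=\sup_n\sup_{Q_T}R_n<\infty$ and introduce $v_n:=e^{-Kt}z_n\geq 0$, which solves $\partial_t v_n-\Delta(\mu_n v_n)=(R_n-K)v_n$ with $(R_n-K)v_n\leq 0$; thus each $v_n$ is a non-negative subsolution of the \emph{homogeneous} Kolmogorov equation with diffusion $\mu_n$ and datum $z_n^0$. Pairing with the solution $w_n$ of the dual problem $-\partial_t w_n-\mu_n\Delta w_n=\theta$, $w_n(T)=0$ (which is non-negative for $\theta\geq 0$ by the weak maximum principle), and using that the dual energy estimate involves only the lower bound $\mu_*:=\inf_n\inf_{Q_T}\mu_n>0$, one obtains $\int_{Q_T}v_n\theta\leq\int_{\T^N}z_n^0\,w_n(0)\leq C(\mu_*,T,N)\,\|z_n^0\|_{\textnormal{L}^2(\T^N)}\,\|\theta\|_{\textnormal{L}^2(Q_T)}$ for every $\theta\geq 0$. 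Choosing $\theta=v_n$ gives $\|v_n\|_{\textnormal{L}^2(Q_T)}\leq C\,\|z_n^0\|_{\textnormal{L}^2}$, hence, since $v_n\leq z_n\leq e^{KT}v_n$ and $\sup_n\|z_n^0\|_{\textnormal{L}^2}<\infty$, a uniform bound for $(z_n)_n$ in $\textnormal{L}^2(Q_T)$. The decisive feature is that this costs nothing on the \emph{upper} side of $\mu_n$, which is precisely why the whole analysis is routed through the dual problem rather than through energy estimates on $\mu_n z_n$.

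With this bound at hand I would extract a subsequence along which $\mu_n\to\mu$ strongly in $\textnormal{L}^1(Q_T)$ (the cluster point $\mu$ being bounded, by hypothesis), while $z_n\rightharpoonup z$ and $R_n\rightharpoonup R$ weakly in $\textnormal{L}^2(Q_T)$. The task is then to promote the weak convergence of $(z_n)_n$ into strong convergence, which is exactly the type of conclusion delivered by the stability result, as the latter requires only the lower bound on the diffusion together with $\textnormal{L}^2$ control of data and source.

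The main obstacle is that the source of the $n$-th equation, $R_n z_n$, is not a fixed $\textnormal{L}^2$ datum: it depends on $n$ and, being a product of two sequences bounded only in $\textnormal{L}^2$, it is a priori controlled only in $\textnormal{L}^1(Q_T)$ and is not equi-integrable, so the stability result cannot be applied verbatim. To bypass this I would split $R_n=R_n^+-R_n^-$ and write the source as $R_n^+z_n-R_n^-z_n$. Since $R_n^+\leq\max(K,0)$, the first part is dominated by a constant times $z_n$ and hence bounded in $\textnormal{L}^2(Q_T)$, placing it within reach of the stability machinery; the second part $-R_n^-z_n$ is sign-definite and only $\textnormal{L}^1$-bounded, but a uniform $\textnormal{L}^1$ bound comes for free by integrating the equation for $v_n$ over $Q_T$ (the Laplacian having zero mean on $\T^N$, one finds $\int_{Q_T}(K-R_n)v_n=\int_{\T^N}(v_n(0)-v_n(T))\leq\|z_n^0\|_{\textnormal{L}^1}$, which dominates $\int_{Q_T}R_n^-v_n$), and its absorbing sign lets one keep it under control through the maximum principle rather than estimate it in $\textnormal{L}^2$.

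Running the dual argument of the stability result on $z_n-z$ with diffusion $\mu_n$, the diffusion discrepancy is absorbed exactly as in the proof of that result — this is where the $\textnormal{L}^1$ convergence $\mu_n\to\mu$ and the boundedness of the cluster point enter — while the $\textnormal{L}^2$ part of the source pairs against the uniformly bounded dual solutions to yield vanishing contributions and the sign-definite part only helps; taking the dual datum equal to $z_n-z$ then closes the estimate and gives $z_n\to z$ strongly in $\textnormal{L}^2(Q_T)$. Finally, strong convergence of $z_n$ against weak convergence of $R_n$ yields $R_n z_n\rightharpoonup Rz$, so that $z$ solves $\partial_t z-\Delta(\mu z)=Rz$ with the expected initial datum. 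I expect the genuine difficulty to be concentrated in this last stage: reconciling the low, merely $\textnormal{L}^1$ integrability of the self-referential source $R_n z_n$ with the $\textnormal{L}^2$ strong conclusion, without any uniform upper bound on $\mu_n$ to fall back on, which is exactly what forces the splitting above and the systematic recourse to the dual formulation.
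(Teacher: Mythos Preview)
Your opening and diagnosis are close to the paper's: the $\textnormal{L}^2$ bound via the exponential change $v_n=e^{-Kt}z_n$ together with duality is essentially Proposition~\ref{propo:sublin}$(ii)$ combined with the duality estimate, and your identification of the real difficulty --- that $R_nz_n$ is only $\textnormal{L}^1$-bounded --- is exactly right. The splitting $R_n=R_n^+-R_n^-$ parallels the paper's decomposition $R_nz_n=\rho z_n+(R_n-\rho)z_n$.

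There is, however, a genuine gap in the last stage. You assert that ``the sign-definite part only helps'' and that testing against the dual solution with datum $z_n-z$ ``closes the estimate''. This is not justified: the dual solution $\Phi_n$ attached to the source $z_n-z$ has \emph{no sign}, so the non-positivity of $-R_n^-z_n$ gives you nothing when paired with $\Phi_n$. Moreover, at this point $z$ is only a weak limit; you do not yet know that it solves any equation, so there is no ``$z_n-z$ equation'' with an identified source to which the stability argument of Proposition~\ref{propo:comp} could be applied. The paper does not attempt a direct $z_n-z$ argument. Instead it (a) first upgrades the $\textnormal{L}^2$ bound on $z_n$ to \emph{uniform $\textnormal{L}^2$-integrability}, by invoking Proposition~\ref{propo:comp} on the auxiliary homogeneous solutions $\widetilde{z}_n$ (which are genuinely compact, not just bounded) and using $0\leq z_n\leq e^{\rho T}\widetilde{z}_n$; this step is absent from your proposal and is essential, since the final passage from $\textnormal{L}^1$ to $\textnormal{L}^2$ convergence goes through Vitali. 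Then (b) for the piece $z_n^-$ carrying the sign-definite source $P_n:=(R_n-\rho)z_n$, the paper does not rely on the sign in a pairing but instead exploits the \emph{uniform integrability} of $P_n$ (which follows from $(R_n)\din\textnormal{L}^2$ and the uniform $\textnormal{L}^2$-integrability of $(z_n)$): it truncates $P_n$ at level $p$, applies Proposition~\ref{propo:comp} to the bounded part to get compactness of $u_n^p$, and uses the $\textnormal{L}^1$-smallness of the tail (this is where the sign is used, to turn an $\textnormal{L}^1$ bound on the source into an $\textnormal{L}^1$ bound on $v_n^p$) together with a diagonal extraction to produce a Cauchy sequence in $\textnormal{L}^1$. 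Your proposal does not supply any mechanism of this kind, and the hand-wave ``only helps'' does not survive contact with a signless dual test function.
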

\begin{remark}\label{rem:conv}
The compactness for $(\mu_n)_n$ operates in $\textnormal{L}^1(Q_T)$ but we need the existence of a bounded cluster point in order to use the result of Section \ref{sec:dualsol}. No uniform $\textnormal{L}^\infty(Q_T)$ bounds are required on $(\mu_n)_n$ or $(R_n)_n$, only uniform lower-bound and upper-bound respectively. We give more comments concerning these assumptions in Section~\ref{sec:com}.
\end{remark}
When $\mu_n$ equals a positive constant and $R_n=0$, the previous lemma is nothing more than the standard compactness of the heat operator, a fact that could be seen directly by gradient estimate or using the heat kernel for instance. However, it is important to note that, still with $R_n=0$, even the case $\mu_n=\mu\in\textnormal{L}^\infty(Q_T)$ (which follows directly from our result) does not seem obvious to be established directly by energy or kernel methods.

\medskip 

If $R_n z_n$ is replaced by a sequence $G_n$, no assumption of the sign of $z_n$ is required and compactness can be replaced by stability (see Proposition~\ref{propo:comp}), because the Kolmogorov equation is then well-posed. A similar result is possible if $R_n$ depends solely on the time variable (by a suitable change of unknown), but we do not detail this type of generalization herein.
\section{The Kolmogorov equation}\label{sec:dualsol}
In this section we focus on the problem \eqref{intro:eq:kol1} -- \eqref{intro:eq:kol2}, for which we start by giving a precise of solution on the periodic parabolic cylinder $Q_T$.
\begin{definition}\label{def:dist}
For any $G\in\textnormal{L}^1(Q_T)$,  $z_0\in\textnormal{L}^1(\T^N)$ and any measurable function $\mu$ on $Q_T$, a function $z\in\textnormal{L}^1(Q_T)$ is a distributional solution of the Cauchy problem (or simply: a solution in $\mathscr{D}'(Q_T)$) \eqref{intro:eq:kol1} -- \eqref{intro:eq:kol2} if $z\mu\in\textnormal{L}^1(Q_T)$ and furthermore 
\begin{align}
\label{eq:def:dist}\forall\ffi\in\mathscr{D}(Q_T),\qquad -\int_{Q_T} z(\partial_t \ffi +\mu\Delta\ffi) = \int_{\T^N} z^0 \ffi(0,\cdot) + \int_{Q_T} G\ffi.
\end{align}
\end{definition}
Our analysis of the Kolmogorov equation relies strongly on the study of the dual problem which is introduced in the next paragraph.
\subsection{Generic results on the dual problem} 
We focus in this paragraph on the so-called \emph{dual problem}
\begin{align}
\label{eq:dua1}\partial_t \Phi + \mu\Delta \Phi &= S, \\
\label{eq:dua2}\Phi(T,\cdot) &= 0.
\end{align}
Pierre, Martin and Schmitt were the first to remark \cite{PiSc,MaPi} that one can recover valuable information on the Kolmogorov equation from the dual one. In these articles the diffusion coefficient $\mu$ was considered constant and the main information recovered on the solution $z$ of the Kolmogorov equation was the so-called duality lemma that we will encounter in the next paragraph, as a by-product of our analysis. We propose here to go further into the exploration of the dual problem, in order to transfer as much possible information to the original Kolmogorov equation.

\medskip 

\noindent We first have the following well-posedness result.
\begin{Propo}\label{propo:duaphi}
Consider $\mu\in\textnormal{L}^\infty(Q_T)$ positively lower-bounded. For all $S\in\textnormal{L}^2(Q_T)$ there exists a unique fonction 
\begin{align*}
\Phi\in \textnormal{L}^\infty(0,T;\H^1(\T^N))\cap\textnormal{L}^2(0,T;\H^2(\T^N))\cap\mathscr{C}^0([0,T];\textnormal{L}^2(\T^N)),
\end{align*}
solution of \eqref{eq:dua1} -- \eqref{eq:dua2}. This solution satisfies the following \emph{a priori} estimates
\begin{align}
\label{ineq:apriori1}\|\nabla \Phi \|_{\textnormal{L}^\infty(0,T;\textnormal{L}^2(\T^N))}^2 + \|\mu^{1/2}\Delta \Phi\|_{\textnormal{L}^2(Q_T)}^2 &\leq \|\mu^{-1/2}S\|_{\textnormal{L}^2(Q_T)}^2,\\
\label{ineq:apriori2}\| \Phi \|^2_{\textnormal{L}^\infty(0,T;\textnormal{L}^2(\T^N))} &\leq C_N \left(\|\mu\|_{\textnormal{L}^1(Q_T)}+1 \right) \|\mu^{-1/2}S\|_{\textnormal{L}^2(Q_T)}^2.
\end{align}
The mapping $S\mapsto \Phi$  is linear continuous from $\textnormal{L}^2(Q_T)$ to $\textnormal{L}^2(0,T;\H^2(\T^N))\cap \textnormal{L}^\infty(0,T;\H^1(\T^N))$, and compact from $\textnormal{L}^2(Q_T)$ to $\mathscr{C}^0([0,T];\textnormal{L}^2(\T^N))$. Furthermore, if $S\geq 0$ we have $\Phi\leq 0$. 
\end{Propo}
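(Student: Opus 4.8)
The plan is to reduce the backward problem to a standard forward parabolic equation by the time reversal $\Psi(t,\cdot):=\Phi(T-t,\cdot)$, $\tilde\mu(t,\cdot):=\mu(T-t,\cdot)$, $\tilde S(t,\cdot):=S(T-t,\cdot)$, which turns \eqref{eq:dua1}--\eqref{eq:dua2} into $\partial_t\Psi-\tilde\mu\Delta\Psi=-\tilde S$ with $\Psi(0,\cdot)=0$. The chief difficulty is that this equation is in \emph{non-divergence} form with a merely $\textnormal{L}^\infty$ coefficient, so neither the energy estimates nor the maximum principle can be run directly; I would circumvent this by mollifying $\mu$. Replace $\tilde\mu$ by a smooth $\tilde\mu_\ep$ obtained by space–time convolution; mollification preserves both the positive lower bound and the $\textnormal{L}^\infty$ bound, and keeps $\norm{\tilde\mu_\ep}_{\textnormal{L}^1(Q_T)}$ controlled. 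For each fixed smooth, uniformly elliptic coefficient the operator $-\tilde\mu_\ep\Delta$ is uniformly parabolic, so classical linear parabolic theory provides a unique $\Psi_\ep\in\textnormal{L}^2(0,T;\H^2(\T^N))\cap\textnormal{L}^\infty(0,T;\H^1(\T^N))$ with $\partial_t\Psi_\ep\in\textnormal{L}^2(Q_T)$.

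The two a priori estimates are obtained on these smooth approximations and then transferred to the limit. For \eqref{ineq:apriori1} I would test the equation against $\Delta\Psi_\ep$: the term $\int\partial_t\Psi_\ep\,\Delta\Psi_\ep$ produces $-\tfrac12\frac{d}{dt}\norm{\nabla\Psi_\ep}_{\textnormal{L}^2}^2$, the diffusion term gives the good quantity $\int\tilde\mu_\ep|\Delta\Psi_\ep|^2$, and the source is split as $\int(\tilde\mu_\ep^{-1/2}\tilde S)(\tilde\mu_\ep^{1/2}\Delta\Psi_\ep)$ and absorbed by Young's inequality; integrating in time from $0$ (where $\nabla\Psi_\ep=0$) yields \eqref{ineq:apriori1} with a constant independent of $\ep$. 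For \eqref{ineq:apriori2} the only missing piece is the spatial average $\bar\Psi_\ep(t)$, since Poincaré--Wirtinger on $\T^N$ already controls $\Psi_\ep-\bar\Psi_\ep$ by $\nabla\Psi_\ep$; here the non-constancy of $\mu$ bites, because $\int_{\T^N}\tilde\mu_\ep\Delta\Psi_\ep$ no longer vanishes. Integrating the equation over $\T^N$ and then in time, and estimating $\abs{\int\tilde\mu_\ep\Delta\Psi_\ep}\le(\int\tilde\mu_\ep)^{1/2}(\int\tilde\mu_\ep|\Delta\Psi_\ep|^2)^{1/2}$ (and the source analogously), I would bound $\bar\Psi_\ep$ in $\textnormal{L}^\infty(0,T)$ by $\norm{\tilde\mu_\ep}_{\textnormal{L}^1(Q_T)}^{1/2}\norm{\tilde\mu_\ep^{-1/2}\tilde S}_{\textnormal{L}^2(Q_T)}$, which combined with \eqref{ineq:apriori1} gives \eqref{ineq:apriori2}.

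With these uniform bounds I would pass to the limit $\ep\to0$: $\Psi_\ep$ converges weakly-$\ast$ in $\textnormal{L}^2(0,T;\H^2)\cap\textnormal{L}^\infty(0,T;\H^1)$, while $\tilde\mu_\ep\to\tilde\mu$ strongly in every $\textnormal{L}^p(Q_T)$, $p<\infty$; since $\Delta\Psi_\ep\rightharpoonup\Delta\Psi$ in $\textnormal{L}^2(Q_T)$, the strong–weak product $\tilde\mu_\ep\Delta\Psi_\ep\rightharpoonup\tilde\mu\Delta\Psi$ lets the limit $\Psi$ solve the reversed problem and inherit the estimates by weak lower semicontinuity; undoing the reversal produces $\Phi$ and \eqref{ineq:apriori1}--\eqref{ineq:apriori2}. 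Uniqueness is immediate from these estimates applied to the difference of two solutions (the case $S=0$ forces $\nabla\Phi\equiv0$ and zero average, hence $\Phi\equiv0$); this makes $S\mapsto\Phi$ well-defined and linear, and the two-sided bounds on $\mu$ turn the estimates into continuity into $\textnormal{L}^2(0,T;\H^2)\cap\textnormal{L}^\infty(0,T;\H^1)$. The continuity $\Phi\in\mathscr{C}^0([0,T];\textnormal{L}^2(\T^N))$ follows from $\Phi\in\textnormal{L}^2(0,T;\H^2)$ together with $\partial_t\Phi=S-\mu\Delta\Phi\in\textnormal{L}^2(Q_T)$ via the standard Lions--Aubin embedding. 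For the compactness into $\mathscr{C}^0([0,T];\textnormal{L}^2)$, I would take $(S_n)_n\din\textnormal{L}^2(Q_T)$, note $(\Phi_n)_n\din\textnormal{L}^\infty(0,T;\H^1)$ and $(\partial_t\Phi_n)_n\din\textnormal{L}^2(Q_T)$, and invoke the Aubin--Lions--Simon lemma with the compact embedding $\H^1(\T^N)\hookrightarrow\hookrightarrow\textnormal{L}^2(\T^N)$ to get $(\Phi_n)_n\ddin\mathscr{C}^0([0,T];\textnormal{L}^2(\T^N))$. The sign property is read off the regularization: for smooth $\tilde\mu_\ep$ the classical parabolic maximum principle applied to $\partial_t\Psi_\ep-\tilde\mu_\ep\Delta\Psi_\ep=-\tilde S\le0$ with $\Psi_\ep(0,\cdot)=0$ gives $\Psi_\ep\le0$, and the non-positive cone being closed under the limit, $\Psi\le0$, i.e.\ $\Phi\le0$.

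The step I expect to be most delicate is the control of the spatial mean in \eqref{ineq:apriori2}: it is exactly where the non-divergence structure and the mere $\textnormal{L}^1$ (rather than $\textnormal{L}^\infty$) control of $\mu$ enter, and it explains the factor $\norm{\mu}_{\textnormal{L}^1(Q_T)}+1$. The reliance on regularization for both the energy identity (which needs $\Delta\Phi$ as a legitimate test function) and the maximum principle (unavailable by energy methods for $\textnormal{L}^\infty$ coefficients in non-divergence form) is the other point requiring care.
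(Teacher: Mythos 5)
Your overall architecture coincides with the paper's: regularize the coefficient $\mu$ (the time reversal is cosmetic), solve the resulting uniformly parabolic problem, obtain the two energy estimates by testing against the Laplacian and by integrating the equation over $\T^N$ (Cauchy--Schwarz with the $\textnormal{L}^1$ norm of $\mu$, then Poincar\'e--Wirtinger), pass to the limit by weak compactness and strong--weak products, and read the sign off the maximum principle at the regularized level. All of those steps are sound and match the paper's proof, as does the compactness of $S\mapsto\Phi$ via a time-derivative bound and Aubin--Lions (the paper uses Rellich plus Ascoli, which is the same mechanism).

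There is, however, one genuine gap: your uniqueness argument. You derive \eqref{ineq:apriori1}--\eqref{ineq:apriori2} only for the approximations $\Psi_\ep$ and transfer them to the limit by lower semicontinuity; this shows that \emph{the solution you construct} satisfies the estimates, not that \emph{every} solution in the stated class does. Saying that uniqueness is ``immediate from these estimates applied to the difference of two solutions'' therefore begs the question: for an arbitrary pair of solutions you must first justify the energy identity for their difference $w$, i.e.\ the integration by parts $\int_t^T\int_{\T^N}\partial_t w\,\Delta w=\tfrac12\|\nabla w(t)\|_{\textnormal{L}^2(\T^N)}^2$, when $w$ is only known to lie in $\textnormal{L}^\infty(0,T;\H^1)\cap\textnormal{L}^2(0,T;\H^2)\cap\mathscr{C}^0([0,T];\textnormal{L}^2)$ and not a priori in $\mathscr{C}^0([0,T];\H^1)$. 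The paper devotes a dedicated mollification-of-$\Phi$ argument precisely to this point, and Section~\ref{sec:com} explains that this is exactly the step that obstructs any extension to unbounded $\mu$. The gap is fixable --- either reproduce that approximation of $\Phi$ itself, or observe that since $\mu\in\textnormal{L}^\infty(Q_T)$ the equation gives $\partial_t w\in\textnormal{L}^2(Q_T)$, so $w\in\textnormal{L}^2(0,T;\H^2)\cap\H^1(0,T;\textnormal{L}^2)\hookrightarrow\mathscr{C}^0([0,T];\H^1)$ by interpolation and the integration by parts becomes legitimate --- but as written your proof establishes existence without establishing uniqueness, and uniqueness is the property the rest of the paper leans on most heavily.
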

\begin{remark}
Equations \eqref{eq:dua1} and  \eqref{eq:dua2} hold in $\textnormal{L}^2(\T^N)$.
\end{remark}
\begin{remark}\label{rem:obs}
The main obstruction to the generalization of Theorem~\ref{thm:nonloctoclas} without the assumption of boundedness of the diffusion coefficients is the establishment of a similar well-posedness result assuming for instance that $\mu$ is (only) integrable, see Section \ref{sec:com} for more details.
\end{remark}
\begin{proof}
  The core of the proof lies in the \emph{a priori} estimates \eqref{ineq:apriori1} -- \eqref{ineq:apriori2}  which are automatically satisfied as soon as $\Phi$ satisfies \eqref{eq:dua1} -- \eqref{eq:dua2} and has the announced regularity. Indeed, in that case since $S\in\textnormal{L}^2(Q_T)$ and $\Delta\Phi\in\textnormal{L}^2(Q_T)$, the equation \eqref{eq:dua1} ensures that $\partial_t \Phi\in\textnormal{L}^2(Q_T)$ because $\mu$ is bounded (if it was not the case we would only have $\mu^{-1/2}\partial_t \Phi\in\textnormal{L}^2(Q_T)$). The equation being satisfied pointwisely we can multiply it by $\Delta \Phi$ and integrate on $[t,T]\times\T^N$ to get by Cauchy-Scwharz and Young's inequalities
\begin{align*}
\int_t^T\int_{\T^N} \Delta\Phi\,\partial_t \Phi  + \frac{1}{2}\|\mu^{1/2}\Delta\Phi\|_{\textnormal{L}^2([t,T]\times\T^N)}^2 \leq \frac{1}{2}\|\mu^{-1/2}S\|_{\textnormal{L}^2([t,T]\times\T^N])}^2.
\end{align*}
If $\Phi\in\mathscr{C}^0([0,T];\H^1(\T^N))$, a simple integration by parts would replace the bracket in the left hand side by $\|\nabla \Phi(t)\|_{\textnormal{L}^2(\T^N)}^2/2$. To overcome this difficulty we can obtain estimate \eqref{ineq:apriori1} by an adequate approximation argument (convolution) considering a sequence of regular functions $\Phi_k$ such that 
\begin{itemize}
\item[$\bullet$] $(\partial_t \Phi_k,\Delta \Phi_k)_k$ approaches $(\partial_t \Phi,\Delta \Phi)$ in $\textnormal{L}^2(Q_T)\times\textnormal{L}^2(Q_T)$\,;
\item[$\bullet$] $\Phi_k(T,\cdot)=0$ in $\textnormal{L}^2(\T^N)$\,;
\item[$\bullet$] $\|\nabla \Phi\|_{\textnormal{L}^\infty(0,T;\textnormal{L}^2(\T^N))} \leq \underline{\lim}_k \|\nabla \Phi_k\|_{\textnormal{L}^\infty(0,T;\textnormal{L}^2(\T^N))}$.
\end{itemize}
Introducing $S_k=\partial_t \Phi_k + \mu \Delta \Phi_k$, we have that $(S_k)_k$ converges in $\textnormal{L}^2(Q_T)$ toward $S$ and when $k$ is fixed, the previous integration by parts is legitimate so we may write for all $t\in[0,T]$
\begin{align*}
\frac{1}{2}\|\nabla \Phi_k(t)\|_{\textnormal{L}^2(\T^N)}^2 + \frac{1}{2}\|\mu^{1/2}\Delta\Phi_k\|_{\textnormal{L}^2([t,T]\times\T^N)}^2 = \frac{1}{2}\|\mu^{-1/2}S_k\|_{\textnormal{L}^2([t,T]\times\T^N)}^2.
\end{align*}
In particular we have the following estimate 
\begin{align*}
\frac{1}{2}\|\nabla \Phi_k\|_{\textnormal{L}^\infty(0,T;\textnormal{L}^2(\T^N)}^2 +\frac{1}{2}\|\mu^{1/2}\Delta \Phi_k\|_{\textnormal{L}^2(Q_T)}^2 \leq \frac{1}{2}\|\nabla \Phi_k(T)\|_{\textnormal{L}^2(\T^N)}^2 + \frac{1}{2}\|\mu^{-1/2}S_k\|_{\textnormal{L}^2(Q_T)},
\end{align*}
from which we infer \eqref{ineq:apriori1}. Integrating \eqref{eq:dua1} on $[t,T]\times\T^N$ and using $\Phi(T)=0$ we infer
\begin{align*}
\int_{\T^N} \Phi(t) = \int_t^T \int_{\T^N}\Big(\mu\Delta \Phi- S\Big),
\end{align*}
from which we get by Cauchy-Schwarz inequality: 
\begin{align*}
\left|\int_{\T^N} \Phi(t)\right| \leq \|\mu\|_{\textnormal{L}^1(Q_T)}^{1/2} \Big(\|\mu^{1/2}\Delta \Phi\|_{\textnormal{L}^2(Q_T)} + \|\mu^{-1/2}S\|_{\textnormal{L}^2(Q_T)}\Big),
\end{align*}
and we recover estimate \eqref{ineq:apriori2} using estimate \eqref{ineq:apriori1} and the Poincaré-Wirtinger inequality.

\vspace{2mm} 

Note that the uniqueness of $\Phi$ (in the given functionnal spaces) is a consequence of estimates \eqref{ineq:apriori1} -- \eqref{ineq:apriori2} that we just proved.

\vspace{2mm}

It remains to justify the existence of $\Phi$ and the properties of the mapping $S\mapsto \Phi$. Pick a regularization $(\mu_k)_k\in\mathscr{C}^\infty(Q_T)$ of $\mu$, approaching it in $\textnormal{L}^1(Q_T)$ and (uniformly) positively lower-bounded by a  $\alpha>0$. Consider also $(S_k)_k$ a sequence of smooth functions approaching $S$ (in $\textnormal{L}^2(Q_T)$). The operator
\begin{align*}
L_k:=\partial_t + \mu_k \Delta   = \partial_t + \textnormal{div}(\mu_k \nabla) - \nabla \mu_k \cdot \nabla  
\end{align*}
is then uniformly (backward) parabolic : there exists a unique (smooth) $\Phi_k$ satisfying 
\begin{align*}
L_k \Phi_k = S_k,
\end{align*}
together with the terminal condition $\Phi_k(T,\cdot)= 0$. This solution is non-positive as soon as $S_k\geq 0$, by the maximum principle. We now use the \emph{a priori} estimates that we proved before: since $(\mu_k)_k \din\textnormal{L}^1(Q_T)$ we have $(\Phi_k)_k \din \textnormal{L}^\infty(0,T;\H^1(\T^N))$ and $(\mu_k^{1/2}\Delta \Phi_k)_k\din\textnormal{L}^2(Q_T)$. Since $\mu_k \geq \alpha$, the latter estimate leads to $(\Delta \Phi_k)_k\din\textnormal{L}^2(Q_T)$ so that we have eventually $(\Phi_k)_k\din\textnormal{L}^2(0,T;\H^2(\T^N))$. In particular, using the equality $\partial_t \Phi_k = -\mu_k \Delta \Phi_k + S_k$, we get $(\partial_t \Phi_k)_k\din\textnormal{L}^2(Q_T)$, whence uniform $\textnormal{L}^2$-equicontinuity (in time) for $(\Phi_k)_k$. Since this sequence is also bounded in $\textnormal{L}^\infty(0,T;\H^1(\T^N))$, we infer by Rellich's and Ascoli's theorems that $(\Phi_k)_k\ddin \mathscr{C}^0([0,T];\textnormal{L}^2(\T^N))$. We consider then a subsequence converging (whether it is strongly, weakly or weakly$-\star$) in all the previous spaces. The corresponding cluster point is the solution $\Phi$ and is indeed non-positive if the $S_k$ are chosen like this, which can indeed be done if $S\geq 0$. Lastly, the continuity of the mapping $S\mapsto \Phi$ is clear : all the previous estimates can be written in the form $\lesssim \|S\|_{\textnormal{L}^2(Q_T)}$ (because $\mu$ is lower-bounded). As for the compactness of the mapping with value in $\mathscr{C}^0([0,T];\textnormal{L}^2(\T^N))$, it is obtained as before using Rellich's and Ascoli's theorems. $\qedhere$
\end{proof}
Contrary to the Kolmogorov equation for which the stability of the solution with respect to $\mu$ will demand more effort, a similar statement (in fact, a stronger one) for the dual problem is rather easy to prove.
\begin{lem}\label{lem:comphi}
For all $n\in\N$ consider $\mu_n\in\textnormal{L}^\infty(Q_T)$ uniformly (in $n,t,x$) lower-bounded by a positive constant, $S_n\in\textnormal{L}^2(Q_T)$ and $\Phi_n$ the solution of the dual problem (given by Proposition \ref{propo:duaphi})
\begin{align*}
  \partial_t \Phi_n + \mu_n\Delta \Phi_n &= S_n, \\
\Phi_n(T,\cdot) &= 0.
\end{align*}
If $(\mu_n)_n\ddin\textnormal{L}^1(Q_T)$ (for the weak topology) and $(S_n)_n\din\textnormal{L}^2(Q_T)$, then $(\Phi_n)_n\ddin\mathscr{C}^0([0,T];\textnormal{L}^2(\T^N))$.
\end{lem}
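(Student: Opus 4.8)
The plan is to run an Arzelà--Ascoli argument in $\mathscr{C}^0([0,T];\textnormal{L}^2(\T^N))$, combining the uniform \emph{a priori} bounds of Proposition~\ref{propo:duaphi} with a time-equicontinuity estimate. The only non-routine ingredient is this equicontinuity: since no uniform $\textnormal{L}^\infty$ bound on $\mu_n$ is assumed, one \emph{cannot} bound $\partial_t\Phi_n=S_n-\mu_n\Delta\Phi_n$ uniformly in $\textnormal{L}^2(Q_T)$, so the naive time-derivative estimate is unavailable. The resolution is that the hypothesis $(\mu_n)_n\ddin\textnormal{L}^1(Q_T)$ (weak topology) secretly provides \emph{equi-integrability} through the Dunford--Pettis theorem, and this is precisely what replaces the missing bound. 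First I would extract a (non-relabelled) subsequence along which $\mu_n\rightharpoonup\mu$ weakly in $\textnormal{L}^1(Q_T)$, set $\alpha:=\inf_n\inf_{Q_T}\mu_n>0$, and observe that $\|\mu_n^{-1/2}S_n\|_{\textnormal{L}^2}\leq\alpha^{-1/2}\|S_n\|_{\textnormal{L}^2}$ is bounded. Estimate \eqref{ineq:apriori1} then makes $(\nabla\Phi_n)_n$ bounded in $\textnormal{L}^\infty(0,T;\textnormal{L}^2(\T^N))$ and $(\mu_n^{1/2}\Delta\Phi_n)_n$ bounded in $\textnormal{L}^2(Q_T)$; using $\mu_n\geq\alpha$ again, $(\Delta\Phi_n)_n$ is bounded in $\textnormal{L}^2(Q_T)$. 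A weakly convergent sequence being bounded, $\|\mu_n\|_{\textnormal{L}^1(Q_T)}$ stays bounded, so \eqref{ineq:apriori2} controls $\|\Phi_n\|_{\textnormal{L}^\infty(0,T;\textnormal{L}^2)}$; altogether $(\Phi_n)_n$ is bounded in $\textnormal{L}^\infty(0,T;\H^1(\T^N))$. By Rellich's theorem this gives, for each fixed $t$, relative compactness of $\{\Phi_n(t)\}_n$ in $\textnormal{L}^2(\T^N)$ --- the pointwise half of Ascoli.

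The heart of the proof is the equicontinuity. I would fix $\psi\in\mathscr{C}^\infty(\T^N)$ and integrate the equation against $\psi$ between $s$ and $t$ (legitimate since, at fixed $n$, $\partial_t\Phi_n\in\textnormal{L}^2(Q_T)$ by Proposition~\ref{propo:duaphi}), obtaining
\[
\langle\Phi_n(t)-\Phi_n(s),\psi\rangle=\int_s^t\!\!\int_{\T^N}S_n\psi-\int_s^t\!\!\int_{\T^N}(\mu_n^{1/2}\Delta\Phi_n)(\mu_n^{1/2}\psi).
\]
The first term is bounded by $\|S_n\|_{\textnormal{L}^2(Q_T)}\,|t-s|^{1/2}\,\|\psi\|_{\textnormal{L}^2(\T^N)}$. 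For the second, Cauchy--Schwarz produces the factor $\|\mu_n^{1/2}\Delta\Phi_n\|_{\textnormal{L}^2(Q_T)}$ (bounded, by the previous step) times $\big(\int_{[s,t]\times\T^N}\mu_n|\psi|^2\big)^{1/2}\leq\|\psi\|_{\textnormal{L}^\infty}\big(\int_{[s,t]\times\T^N}\mu_n\big)^{1/2}$. Here the crucial point enters: a weakly convergent sequence in $\textnormal{L}^1(Q_T)$ is relatively weakly compact, hence equi-integrable by Dunford--Pettis, so that $\omega(h):=\sup_n\sup_{|E|\leq h}\int_E\mu_n\to0$ as $h\to0$. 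Since $|[s,t]\times\T^N|=|t-s|$, the second term is bounded by $M\,\|\psi\|_{\textnormal{L}^\infty}\,\omega(|t-s|)^{1/2}$ uniformly in $n$. Absorbing $\|\psi\|_{\textnormal{L}^\infty}$ and $\|\psi\|_{\textnormal{L}^2}$ by $\|\psi\|_{\H^m}$ via the Sobolev embedding $\H^m(\T^N)\hookrightarrow\textnormal{L}^\infty(\T^N)$ for $m>N/2$, I obtain a \emph{uniform} modulus of continuity $\|\Phi_n(t)-\Phi_n(s)\|_{\H^{-m}}\leq\eta(|t-s|)$ with $\eta(h)\to0$.

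Finally I would upgrade this weak equicontinuity to strong $\textnormal{L}^2$-equicontinuity through Ehrling's (Lions') interpolation lemma: because $\H^1(\T^N)\hookrightarrow\hookrightarrow\textnormal{L}^2(\T^N)\hookrightarrow\H^{-m}(\T^N)$, for every $\theta>0$ there is $C_\theta$ with $\|w\|_{\textnormal{L}^2}\leq\theta\|w\|_{\H^1}+C_\theta\|w\|_{\H^{-m}}$. Applying this to $w=\Phi_n(t)-\Phi_n(s)$, the first term is at most $2\theta\sup_n\|\Phi_n\|_{\textnormal{L}^\infty(0,T;\H^1)}$, which is made arbitrarily small by fixing $\theta$; then choosing $|t-s|$ small controls $C_\theta\,\eta(|t-s|)$. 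This yields equicontinuity of $(\Phi_n)_n$ in $\mathscr{C}^0([0,T];\textnormal{L}^2(\T^N))$. Combined with the pointwise relative compactness from the first step, the vector-valued Arzelà--Ascoli theorem delivers $(\Phi_n)_n\ddin\mathscr{C}^0([0,T];\textnormal{L}^2(\T^N))$, as claimed. The single genuine difficulty, as indicated above, is extracting a time modulus of continuity without an $\textnormal{L}^\infty$ bound on $\mu_n$; everything else is a routine assembly of the estimates of Proposition~\ref{propo:duaphi}, Rellich, Ehrling and Ascoli.
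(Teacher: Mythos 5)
Your proof is correct and follows essentially the same route as the paper: the paper likewise combines the \emph{a priori} bounds of Proposition~\ref{propo:duaphi} with the observation that weak relative compactness of $(\mu_n)_n$ in $\textnormal{L}^1(Q_T)$ gives equi-integrability, which via Cauchy--Schwarz against the bounded sequence $(\mu_n^{1/2}\Delta\Phi_n)_n$ yields a uniform time modulus of continuity in a weak norm, upgraded by interpolation with the $\textnormal{L}^\infty(0,T;\H^1(\T^N))$ bound and concluded by Rellich and Ascoli. The only cosmetic difference is that the paper derives equicontinuity in $\mathscr{C}^0([0,T];\textnormal{L}^1(\T^N))$ directly from the uniform integrability of $\partial_t\Phi_n$, whereas you test against smooth functions to get it in $\H^{-m}$ and invoke Ehrling's lemma.
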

\begin{proof}
Thanks to Proposition \ref{propo:duaphi}, since $(\mu_n)_n\din\textnormal{L}^1(Q_T)$ and $(\mu_n)_n$ is uniformly lower-bounded, we have $(\Phi_n)_{n}\din\textnormal{L}^\infty(0,T;\H^1(\T^N))$ and similarly $(\mu_n^{1/2}\Delta\Phi_n)_{n}\din\textnormal{L}^2(Q_T)$. We can now reproduce the argument used in the proof of Proposition \ref{propo:duaphi}. The crucial point is to notice that since $(\mu_n)_n\ddin\textnormal{L}^1(Q_T)$ for the weak topology, it is uniformly integrable. Therefore, using the Cauchy-Schwarz inequality we get that both $(\mu_n \Delta \Phi_n)_n$ and $(\mu_n^{1/2} S_n)_n$ are also uniformly integrable. Eventually, we deduce the equi-continuity of  $(\Phi_n)_n$ in $\mathscr{C}^0([0,T];\textnormal{L}^1(\T^N))$ and then its compactness in $\mathscr{C}^0([0,T];\textnormal{L}^2(\T^N))$ as we did in the proof of Proposition \ref{propo:duaphi}. $\qedhere$
\end{proof}
\subsection{Dual solutions}
The assumption of boundedness on $\mu$ gives a framework in which the problem \eqref{intro:eq:kol1} -- \eqref{intro:eq:kol2} is well-posed. 
\begin{thm}\label{thm:duaz}
Fix $\mu\in\textnormal{L}^\infty(Q_T)$ positively lower-bounded, $G\in \textnormal{L}^2(Q_T)$ and $z^0\in\textnormal{L}^2(\T^N)$. There exists a unique solution $z$ to equations \eqref{intro:eq:kol1} -- \eqref{intro:eq:kol2} in the sense of Definition \ref{def:dist}. It satisfies the (stronger) formulation
\begin{align}\label{eq:form:strong}
\forall S\in\textnormal{L}^2(Q_T),\qquad -\int_{Q_T} z S = \int_{\T^N} z^0 \Phi_\mu^S(0) + \int_{Q_T} G \Phi_\mu^S,
\end{align}
 and the following estimate 
\begin{align}
\label{ineq:z}\|\mu^{1/2}z\|_{\textnormal{L}^2(Q_T)} \leq C_N \Big(\|\mu\|_{\textnormal{L}^1(Q_T)}^{1/2}+1\Big)\Big(\|z^0\|_{\textnormal{L}^2(\T^N)}+\|G\|_{\textnormal{L}^1(0,T;\textnormal{H}^{-1}(\T^N)))}\Big).
\end{align}
\end{thm}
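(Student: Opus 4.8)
The plan is to solve \eqref{intro:eq:kol1}--\eqref{intro:eq:kol2} by duality, in the spirit of Martin--Pierre--Schmitt: rather than constructing $z$ directly, I will \emph{define} it through its action against the dual solutions $\Phi_\mu^S$ furnished by Proposition~\ref{propo:duaphi}. The identity \eqref{eq:form:strong} is precisely what one obtains by formally multiplying \eqref{intro:eq:kol1} by $\Phi_\mu^S$ and integrating by parts twice in space and once in time, using $\Phi_\mu^S(T,\cdot)=0$ and $z(0,\cdot)=z^0$. So the strategy is to take \eqref{eq:form:strong} as the \emph{definition} of $z$, then check that it produces a genuine object and that it is consistent with Definition~\ref{def:dist}.

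For existence I would introduce the linear functional
\[
\ell:\textnormal{L}^2(Q_T)\to\R,\qquad \ell(S):=-\int_{\T^N} z^0\,\Phi_\mu^S(0)-\int_{Q_T} G\,\Phi_\mu^S,
\]
which is well defined and linear because $S\mapsto\Phi_\mu^S$ is (Proposition~\ref{propo:duaphi}). Its continuity is exactly the content of the a priori bounds: since $\mu$ is bounded below, $\|\mu^{-1/2}S\|_{\textnormal{L}^2(Q_T)}\lesssim\|S\|_{\textnormal{L}^2(Q_T)}$, so \eqref{ineq:apriori2} controls $\|\Phi_\mu^S(0)\|_{\textnormal{L}^2(\T^N)}$ while \eqref{ineq:apriori1}--\eqref{ineq:apriori2} together control $\|\Phi_\mu^S\|_{\textnormal{L}^\infty(0,T;\H^1(\T^N))}$, both by $C_N(\|\mu\|_{\textnormal{L}^1(Q_T)}^{1/2}+1)\|S\|_{\textnormal{L}^2(Q_T)}$. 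Estimating the two terms of $\ell(S)$ by Cauchy--Schwarz and by the $\H^{-1}$--$\H^1$ duality gives $|\ell(S)|\lesssim(\|z^0\|_{\textnormal{L}^2(\T^N)}+\|G\|_{\textnormal{L}^1(0,T;\textnormal{H}^{-1}(\T^N))})\|S\|_{\textnormal{L}^2(Q_T)}$, so by the Riesz theorem there is a unique $z\in\textnormal{L}^2(Q_T)$ with $\int_{Q_T}zS=\ell(S)$, which is \eqref{eq:form:strong}. To reach the sharp estimate \eqref{ineq:z} I would not bound $\|z\|_{\textnormal{L}^2}$ crudely (this would produce a spurious $\|\mu\|_{\textnormal{L}^\infty}$) but instead write $\|\mu^{1/2}z\|_{\textnormal{L}^2(Q_T)}=\sup\{\int_{Q_T}zS:\ \|\mu^{-1/2}S\|_{\textnormal{L}^2(Q_T)}\le 1\}$, taking $S=\mu^{1/2}g$ with $\|g\|_{\textnormal{L}^2}\le1$, and feed this normalisation straight into \eqref{ineq:apriori1}--\eqref{ineq:apriori2}, so that the factor $\|\mu\|_{\textnormal{L}^1(Q_T)}^{1/2}+1$ comes out exactly as stated.

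It remains to check that this $z$ solves the equation in the sense of Definition~\ref{def:dist}. Given $\ffi\in\mathscr{D}(Q_T)$, set $S:=\partial_t\ffi+\mu\Delta\ffi\in\textnormal{L}^2(Q_T)$. Since $\ffi$ vanishes near $t=T$ we have $\ffi(T,\cdot)=0$, and $\ffi$ has the regularity required in Proposition~\ref{propo:duaphi}; by the \emph{uniqueness} part of that proposition, $\Phi_\mu^S=\ffi$. Substituting this $S$ into \eqref{eq:form:strong} returns exactly \eqref{eq:def:dist}, so $z$ is a distributional solution.

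Uniqueness is where I expect the real difficulty. The difference $w$ of two solutions solves the homogeneous problem, i.e. $\int_{Q_T}w(\partial_t\ffi+\mu\Delta\ffi)=0$ for all $\ffi\in\mathscr{D}(Q_T)$, and I want to upgrade this to $\int_{Q_T}wS=0$ for every $S$. The natural test function is $\Phi_\mu^S$, but it is not smooth, so I would regularise: pick $\mu_k\to\mu$ in $\textnormal{L}^2(Q_T)$, smooth and uniformly bounded away from $0$, and for a fixed $S\in\mathscr{D}(Q_T)$ solve $\partial_t\ffi_k+\mu_k\Delta\ffi_k=S$ with $\ffi_k(T,\cdot)=0$, obtaining $\ffi_k\in\mathscr{D}(Q_T)$ (it vanishes near $t=T$ by backward uniqueness, since $S$ does). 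Testing against $\ffi_k$ yields $\int_{Q_T}wS=\int_{Q_T}w(\mu_k-\mu)\Delta\ffi_k$, and everything hinges on the commutator on the right tending to $0$. This is the main obstacle, and it is exactly here that the \emph{two-sided} bound on $\mu$ is decisive: because $\mu_k$ is bounded below, \eqref{ineq:apriori1} gives $(\Delta\ffi_k)_k$ bounded in $\textnormal{L}^2(Q_T)$ (not merely $(\mu_k^{1/2}\Delta\ffi_k)_k$), while $\mu_k\to\mu$ in $\textnormal{L}^2(Q_T)$; pairing these with $w$ then forces the commutator to vanish once $w\in\textnormal{L}^2(Q_T)$ — the class in which our solution lives — so that $\int_{Q_T}wS=0$ for all $S\in\mathscr{D}(Q_T)$ and hence $w=0$. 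The genuinely delicate point I anticipate is matching the integrability of $w$ against the mere $\textnormal{L}^2$ control of $\Delta\ffi_k$; this commutator is the Kolmogorov-level counterpart of the well-posedness of the dual problem, and it is the pivot of the whole duality argument.
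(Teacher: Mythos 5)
Your existence step (Riesz representation applied to $S\mapsto -\int z^0\Phi_\mu^S(0)-\int G\,\Phi_\mu^S$), the derivation of \eqref{ineq:z} by weighted duality, and the identification $\Phi_\mu^S=\ffi$ for $S=\partial_t\ffi+\mu\Delta\ffi$ all coincide with the paper's proof. The divergence — and the gap — is in the uniqueness step. The paper does not regularise the coefficient $\mu$: it regularises the dual solution $\Phi_\mu^S$ itself, which shows that $\{\mu^{-1/2}\partial_t\ffi+\mu^{1/2}\Delta\ffi:\ffi\in\mathscr{D}(Q_T)\}$ is dense in $\textnormal{L}^2(Q_T)$ (using that $\mu$ is bounded above \emph{and} below), and then reads the estimate \eqref{ineq:z} as an \emph{a priori} bound valid for \emph{every} distributional solution; uniqueness is an immediate corollary. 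You instead regularise $\mu$ and must kill the commutator $\int_{Q_T} w(\mu_k-\mu)\Delta\ffi_k$. As you state it, the ingredients ``$\mu_k\to\mu$ in $\textnormal{L}^2$, $(\Delta\ffi_k)_k$ bounded in $\textnormal{L}^2$, $w\in\textnormal{L}^2$'' do \emph{not} imply the integral tends to $0$: this is a product of three factors of which one converges in $\textnormal{L}^2$ and two are merely bounded in $\textnormal{L}^2$, and H\"older gives no limit. The repair is to use that $\mu_k$ can be taken uniformly bounded by $\|\mu\|_{\textnormal{L}^\infty(Q_T)}$ and converging a.e., so that $w(\mu_k-\mu)\to 0$ in $\textnormal{L}^2(Q_T)$ by dominated convergence, which then pairs with the $\textnormal{L}^2$ bound on $\Delta\ffi_k$. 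You flag this as ``the genuinely delicate point'' but do not close it; it must be closed for the proof to stand.

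A second, related gap: even once repaired, your commutator argument only yields uniqueness among solutions $w\in\textnormal{L}^2(Q_T)$, whereas Definition~\ref{def:dist} only demands $z\in\textnormal{L}^1(Q_T)$ (with $z\mu\in\textnormal{L}^1(Q_T)$, which is automatic here). For $w$ merely integrable, dominated convergence gives $w(\mu_k-\mu)\to 0$ only in $\textnormal{L}^1(Q_T)$, which cannot be paired with $\Delta\ffi_k$ bounded in $\textnormal{L}^2(Q_T)$. The paper sidesteps this because its \emph{a priori} derivation of \eqref{ineq:z} applies to an arbitrary distributional solution and upgrades it to $\mu^{1/2}z\in\textnormal{L}^2(Q_T)$ before any uniqueness question is posed; in your write-up \eqref{ineq:z} is only established for the particular $z$ produced by the Riesz theorem, so it cannot be invoked to place a competing solution in $\textnormal{L}^2(Q_T)$. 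You should either prove \eqref{ineq:z} as an a priori estimate for all solutions in the sense of Definition~\ref{def:dist} (which is essentially the paper's density argument and makes your coefficient-regularisation superfluous), or explicitly restrict the uniqueness claim to the class $\textnormal{L}^2(Q_T)$.
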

\begin{remark}
As suggested by estimate \eqref{ineq:z}, it is possible to extend considerably the admissible function space for the source term $G$ (and in fact, also the initial data $z^0$), but we avoid here this generalization for the sake of simplicity.
\end{remark}
\begin{proof}
Let us first prove the \emph{a priori} estimate \eqref{ineq:z}. If $z$ solves \eqref{intro:eq:kol1} -- \eqref{intro:eq:kol2}, we have by Definition~\ref{def:dist}
\begin{align}\label{eq:proofthz}
-\int_{Q_T} z(\partial_t \ffi + \mu \Delta\ffi) = \int_{\T^N} z^0 \ffi(0,\cdot) + \int_{Q_T} G \ffi,
\end{align}
for all test functions $\ffi$ in $\mathscr{D}(Q_T)$. Introducing $S:=\partial_t \ffi + \mu\Delta \ffi$ we have obviously $\Phi_\mu^{S} = \ffi$ and the estimate of Proposition~\ref{propo:duaphi} thus leads to
\begin{align*}
\left| \int_{Q_T} zS\right| &\leq \|z^0\|_{\textnormal{L}^2(\T^N)} \|\Phi^S_\mu(0)\|_{\textnormal{L}^2(\T^N)} + \|\Phi^S_\mu\|_{\textnormal{L}^\infty(0,T;\H^1(\T^N))}\|G\|_{\textnormal{L}^1(0,T;\H^{-1}(\T^N))} \\
                            &\leq C_N \Big(\|\mu\|_{\textnormal{L}^1(Q_T)}^{1/2}+1\Big)\Big(\|z^0\|_{\textnormal{L}^2(\T^N)}+\|G\|_{\textnormal{L}^1(0,T;\textnormal{H}^{-1}(\T^N)))}\Big)\|\mu^{-1/2}S\|_{\textnormal{L}^2(Q_T)}.
\end{align*}
Therefore, if $\big\{ \mu^{-1/2}\partial_t \ffi +\mu^{1/2}\Delta \ffi\,:\,\ffi\in\mathscr{D}(Q_T)\big\}$ is dense in $\textnormal{L}^2(Q_T)$, estimate \eqref{ineq:z} will follow directly by a duality argument. Herein the possible complications (see Section~\ref{sec:com}) are annihilated by the boundedness assumption on $\mu$ : the previous density is a simple consequence of the one of $\big\{(\partial_t \ffi,\Delta \ffi)\,:\,\ffi\in\textnormal{L}^2(Q_T)\big\}$ in $\textnormal{L}^2(Q_T)\times\textnormal{L}^2(Q_T)$ which is immediately obtained by standard approximation. Estimate \eqref{ineq:z} is thus proved. 

\medskip 

Starting over from \eqref{eq:proofthz}, the strong formulation \eqref{eq:form:strong} follows by an approximation similar to the one used in the proof of Proposition~\ref{propo:duaphi} : for any $S\in\textnormal{L}^2(Q_T)$, we can (by convolution) pick a sequence $(\Phi_k)_k$ in $\mathscr{D}(Q_T)$ approaching $\Phi_\mu^S$ in $\mathscr{C}^0([0,T];\textnormal{L}^2(\T^N))$ and such that $(\partial_t \Phi_k,\Delta\Phi_k)_k$ approaches $(\partial_t \Phi_\mu^S,\Delta\Phi_\mu^S)$ in $\textnormal{L}^2(Q_T)\times\textnormal{L}^2(Q_T)$. This allows to establish \eqref{eq:form:strong}.

\medskip

It remains to prove that actually such a solution $z$ exists and that it's unique. Because of the previous \emph{a priori} analysis, it is sufficient to establish this well-posedness in $\textnormal{L}^2(Q_T)$. Thanks to the estimate of Proposition~\ref{propo:duaphi}, the linear map
\begin{align*}
\textnormal{L}^2(Q_T) &\longrightarrow \R\\
S&\longmapsto -\int_{\T^N} z^0\Phi^S_\mu(0) - \int_{Q_T} \Phi^S_\mu  G,
\end{align*}
 is continuous. Owing to Riesz's representation theorem we infer the existence of unique element $z\in\textnormal{L}^2(Q_T)$ solving \eqref{eq:form:strong} and the proof is over. $\qedhere$\end{proof}
The inequality \eqref{ineq:z} is known in the literature as the ``duality estimate''. We are stating it in a particular setting in which the diffusion is bounded which allows to solve both the Kolmogorov and the dual equation uniquely. But the strength of the duality estimate is that it can be written as soon as the dual equation has \emph{one} solution and it can hence be used as an \emph{a priori} estimate for any solution of the Kolmogorov equation when the diffusion $\mu$ is only supposed integrable (and positively lower-bounded). This for instance was used in the context of cross-diffusion systems in the articles \cite{DesLepMou,dlmt,lepmou}. However, it is important to note that in all these references the duality lemma was either stated in an idealized setting (assuming $z$ to be regular enough to justify all the manipulations) or a discretized one. In our situation we use rigorously the duality lemma for rather weak solutions, the counterpart being this boundedness assumption on the diffusion coefficient. Note that in this framework a stronger form of the duality estimate is available, allowing for $\textnormal{L}^p(Q_T)$ (with $p>2$) control on the solution, see \cite{dualimpro}. Here, our purpose is to exhibit stability and/or compactness of sequences of solutions to the Kolmogorov equation under this particular setting.
\begin{definition}\label{defi:dualsol}
We say that $z$ is the \textsf{dual solution} of problem \eqref{intro:eq:kol1} -- \eqref{intro:eq:kol2} when we are under the assumptions of Theorem~\ref{thm:duaz}, $z$ being the solution given by this theorem.
\end{definition}
Not only the dual solutions have a stronger variational formulation, but they also enjoy a kind of weak maximum principle. When $\mu$ is regular (say, up to the two first derivatives in $x$), this is not surprising because  $\Delta(\mu z)$ can be written $\nabla\cdot (\mu \nabla z) + \nabla z \cdot \nabla \mu + z \Delta \mu$, so that (weak and strong) maximum principles are simply consequence of the underlying parabolic structure. The situation we consider here is a lot worse, since $\mu$ is only assumed bounded. The following proposition will be useful for the reaction terms that we consider in our cross-diffusion systems.
\begin{Propo}\label{propo:sublin}
Consider $z$ the dual solution of the Cauchy problem \eqref{intro:eq:kol1} -- \eqref{intro:eq:kol2}. We have (disjointly)
\begin{itemize}
\item[$(i)$] If $z^0$ and $G$ are non-negative, then so is $z$ ;
\item[$(ii)$] If $G\leq rz $ for some $r\in\R$, then $z(t,x)\leq \widetilde{z}(t,x)e^{rt}$, where $\widetilde{z}$ is the dual solution of
\begin{align*}
\partial_t \widetilde{z} - \Delta (\mu \widetilde{z}) &= 0,\\
\widetilde{z}(0,\cdot) = z^0.
\end{align*}
\end{itemize}
\end{Propo}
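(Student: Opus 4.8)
The plan is to derive both statements from the sign property of the dual problem recorded in Proposition~\ref{propo:duaphi} (namely $S\geq 0\Rightarrow\Phi_\mu^S\leq 0$), combined with the strong formulation \eqref{eq:form:strong} and the uniqueness granted by Theorem~\ref{thm:duaz}. Part $(i)$ is essentially immediate; the work is concentrated in an exponential rescaling that reduces $(ii)$ to $(i)$.

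For $(i)$ I would argue directly from \eqref{eq:form:strong}. Fix any $S\in\textnormal{L}^2(Q_T)$ with $S\geq 0$. Proposition~\ref{propo:duaphi} gives $\Phi_\mu^S\leq 0$ on $Q_T$, hence in particular $\Phi_\mu^S(0)\leq 0$ on $\T^N$. Since $z^0\geq 0$ and $G\geq 0$, both terms on the right-hand side of \eqref{eq:form:strong} are nonpositive, so $-\int_{Q_T} zS\leq 0$, i.e. $\int_{Q_T} zS\geq 0$ for every nonnegative $S\in\textnormal{L}^2(Q_T)$. Testing against $S=\max(-z,0)$, which is admissible since $z\in\textnormal{L}^2(Q_T)$, forces $\int_{\{z<0\}} z^2\leq 0$, whence $z\geq 0$ almost everywhere.

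For $(ii)$ I would eliminate the reaction term by the rescaling $\zeta:=e^{-rt}z$ and then invoke $(i)$. The first step is to check that $\zeta$ is the \emph{dual solution} of
\[
\partial_t\zeta-\Delta(\mu\zeta)=e^{-rt}(G-rz),\qquad \zeta(0,\cdot)=z^0.
\]
This is verified at the level of Definition~\ref{def:dist}: for $\psi\in\mathscr{D}(Q_T)$ set $\varphi:=e^{-rt}\psi\in\mathscr{D}(Q_T)$; a direct computation gives $e^{-rt}(\partial_t\psi+\mu\Delta\psi)=\partial_t\varphi+r\varphi+\mu\Delta\varphi$, so plugging $\varphi$ into the distributional equation satisfied by $z$ and using $\varphi(0,\cdot)=\psi(0,\cdot)$ produces exactly the distributional formulation for $\zeta$ with source $e^{-rt}(G-rz)$ and datum $z^0$. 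Uniqueness (Theorem~\ref{thm:duaz}) then identifies $\zeta$ as that dual solution, and since $e^{-rt}>0$ and $G\leq rz$ its source satisfies $e^{-rt}(G-rz)\leq 0$.

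Finally I would compare $\zeta$ with $\widetilde z$. By linearity of the dual-solution map (again from Theorem~\ref{thm:duaz}), $W:=\widetilde z-\zeta$ is the dual solution of $\partial_t W-\Delta(\mu W)=-e^{-rt}(G-rz)$ with $W(0,\cdot)=0$; its source is nonnegative and its initial datum vanishes, so part $(i)$ yields $W\geq 0$. This reads $e^{-rt}z\leq\widetilde z$, i.e. $z(t,x)\leq\widetilde z(t,x)e^{rt}$, as claimed. The only genuinely delicate point is the change-of-variables step: one must confirm that multiplying by the time exponential keeps one within the class of dual solutions and transforms the source correctly, which is why I would phrase that step at the level of Definition~\ref{def:dist} and close it by uniqueness, rather than manipulate \eqref{eq:form:strong} directly.
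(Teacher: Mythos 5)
Your proof is correct and follows essentially the same route as the paper: part $(i)$ is the Stampacchia-type argument testing \eqref{eq:form:strong} against the negative part of $z$ and using the sign property of $\Phi_\mu^S$ from Proposition~\ref{propo:duaphi}, and part $(ii)$ is the exponential rescaling $e^{-rt}z$ followed by a comparison with $\widetilde z$ via $(i)$. The only difference is that you spell out the change-of-variables verification at the level of Definition~\ref{def:dist} and close it by uniqueness, a detail the paper leaves as ``an easy computation.''
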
 
\begin{proof}
We start by $(i)$. We could argue by approximation (since we have uniqueness), but we use instead the classical argument of Stampacchia, rephrasing it in our setting. If $z^-$ is the negative part of $z$, we have $z^-\in\textnormal{L}^2(Q_T)$ so it is admissible in the formulation \eqref{eq:form:strong} and hence, denoting by $\langle \cdot,\cdot\rangle$ the $\textnormal{L}^2(Q_T)$ or $\textnormal{L}^2(\T^N)$ inner-product, we have $-\langle z,z^-\rangle = \langle z^0,\Phi^{z^-}_\mu(0)\rangle + \langle \Phi_\mu^{z^-},G\rangle$. Since $z^-\geq 0$, using Proposition \ref{propo:duaphi} we see that $\Phi_\mu^{z-} \leq 0$, since $z^0 \geq 0$ and $G \geq 0$, we eventually obtained $-\langle z,z^-\rangle \leq 0$ which is nothing less than $\|z^-\|_{\textnormal{L}^2(Q_T)}^2 \leq 0$, \emph{i.e.} $z^-=0$ and $z\geq 0$. 

\medskip

\noindent For $(ii)$, an easy computation shows that $w(t,x) := e^{-rt} z(t,x)$ is the dual solution of 
\begin{align*}
\partial_t w -\Delta(\mu w) &= (G - r z) e^{-rt},\\
w(0) &= z^0.
\end{align*} 
If $\widetilde{z}$ is the dual solution of 
\begin{align*}
\partial_t \widetilde{z} - \Delta(\mu \widetilde{z} ) &= 0,\\
\widetilde{z}(0)&=z^0,
\end{align*}
by linearity one checks that $\xi:=\widetilde{z}-w$ is the dual solution of 
\begin{align*}
\partial_t \xi - \Delta(\mu \xi) = (rz-G) e^{-rt}.
\end{align*}
Since the r.h.s. is non-negative (and the initial data is $0$), we infer from the point $(i)$ that we just prove that $\xi \geq 0$, which is exactly what we wanted to prove. $\qedhere$
\end{proof}

\subsection{A stability result}
The following proposition asserts that, for dual solutions, strong convergence of $\mu_n$ (without any control of its gradient) can be transfered to the corresponding sequence $z_n$. It is the key-tool that we use to prove Lemma~\ref{lem:comprz}. The well-posedness of both the dual and the Kolmogorov equation plays a crucial role in this stability result.
\begin{Propo}\label{propo:comp}
For all $n\in\N$ consider $z_n\in\textnormal{L}^2(Q_T)$, the dual solution (in the sense of Definition~\ref{defi:dualsol}) of 
\begin{align}
\label{eq:zn1}\partial_t z_n -\Delta (\mu_n z_n) &= G_n, \\
\label{eq:zn2}z_n(0,\cdot) &= z^0_n.
\end{align}
Assume the existence of $z^0\in\textnormal{L}^2(\T^N)$, $G\in\textnormal{L}^2(Q_T)$ and $\mu\in\textnormal{L}^\infty(Q_T)$ such that $(z^0_n)_n\rightharpoonup z^0$ in $\textnormal{L}^2(\T^N)$, $(G_n)_n\rightharpoonup G$ in $\textnormal{L}^2(Q_T)$ and $(\mu_n)_n\rightarrow \mu$ in $\textnormal{L}^1(Q_T)$. Then $(\mu_n^{1/2} z_n)_n$ and $(z_n)_n$ converge in $\textnormal{L}^2(Q_T)$ to respectively $\mu^{1/2}z$ and $z$, where $z$ is the dual solution of the problem
\begin{align*}
\partial_t z - \Delta(\mu z) &= G,\\
z(0,\cdot) &= z^0.
\end{align*}
\end{Propo}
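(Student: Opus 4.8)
The strategy is to exploit the strong (stronger) formulation \eqref{eq:form:strong} together with the compactness result for the dual problem (Lemma~\ref{lem:comphi}), and to upgrade the weak convergence of $(z_n)_n$ to strong convergence via an energy-type identity. First I would record the uniform bounds: since $\mu_n\to\mu$ in $\textnormal{L}^1(Q_T)$ the sequence $(\mu_n)_n$ is bounded in $\textnormal{L}^1(Q_T)$ and uniformly lower-bounded, so the duality estimate \eqref{ineq:z} of Theorem~\ref{thm:duaz} gives $(\mu_n^{1/2}z_n)_n\din\textnormal{L}^2(Q_T)$; combined with $\inf_n\inf_{Q_T}\mu_n>0$ this yields $(z_n)_n\din\textnormal{L}^2(Q_T)$. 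Hence up to a subsequence $z_n\rightharpoonup \zeta$ weakly in $\textnormal{L}^2(Q_T)$ for some $\zeta$, and I must show $\zeta=z$ and that the convergence is in fact strong.

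To identify the limit I would pass to the limit in the weak formulation. Fix $S\in\textnormal{L}^2(Q_T)$ and let $\Phi_n:=\Phi_{\mu_n}^S$, $\Phi:=\Phi_\mu^S$ be the corresponding dual solutions. By Lemma~\ref{lem:comphi} (with $S_n=S$ fixed, so $(S_n)_n\din\textnormal{L}^2(Q_T)$ trivially) and the weak-$\textnormal{L}^1$ convergence of $(\mu_n)_n$ (strong $\textnormal{L}^1$ convergence implies it), one has $\Phi_n\to\Phi$ in $\mathscr{C}^0([0,T];\textnormal{L}^2(\T^N))$ along a subsequence; uniqueness of the dual problem (Proposition~\ref{propo:duaphi}) forces the whole sequence to converge to $\Phi$, in particular $\Phi_n(0)\to\Phi(0)$ in $\textnormal{L}^2(\T^N)$. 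Writing \eqref{eq:form:strong} for each $n$,
\begin{align*}
-\int_{Q_T} z_n S = \int_{\T^N} z_n^0\,\Phi_n(0) + \int_{Q_T} G_n\,\Phi_n,
\end{align*}
I pass to the limit: the left side tends to $-\int_{Q_T}\zeta S$; on the right, $z_n^0\rightharpoonup z^0$ weakly paired with $\Phi_n(0)\to\Phi(0)$ strongly gives $\int_{\T^N}z^0\Phi(0)$, and $G_n\rightharpoonup G$ weakly paired with $\Phi_n\to\Phi$ strongly in $\textnormal{L}^2(Q_T)$ gives $\int_{Q_T}G\Phi$. Thus $\zeta$ satisfies \eqref{eq:form:strong} for every $S$, so by uniqueness (Theorem~\ref{thm:duaz}) $\zeta=z$, and the whole sequence converges weakly to $z$.

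It remains to upgrade to strong convergence, which is the main obstacle. The idea is to show convergence of the norms $\|\mu_n^{1/2}z_n\|_{\textnormal{L}^2(Q_T)}\to\|\mu^{1/2}z\|_{\textnormal{L}^2(Q_T)}$, which together with weak convergence of $\mu_n^{1/2}z_n\rightharpoonup\mu^{1/2}z$ yields strong convergence in a Hilbert space. I would obtain an energy identity by testing \eqref{eq:form:strong} against the admissible choice $S=z_n$ itself (legitimate since $z_n\in\textnormal{L}^2(Q_T)$), which produces $-\|z_n\|_{\textnormal{L}^2(Q_T)}^2=\int_{\T^N}z_n^0\Phi_{\mu_n}^{z_n}(0)+\int_{Q_T}G_n\Phi_{\mu_n}^{z_n}$; the delicate point is controlling $\Phi_{\mu_n}^{z_n}$ where both the coefficient \emph{and} the source vary with $n$. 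Since $z_n\rightharpoonup z$ weakly and $\mu_n\to\mu$ strongly in $\textnormal{L}^1$, Lemma~\ref{lem:comphi} (now with genuinely varying sources $S_n=z_n$, which are bounded in $\textnormal{L}^2(Q_T)$) gives strong $\mathscr{C}^0([0,T];\textnormal{L}^2)$ compactness of $(\Phi_{\mu_n}^{z_n})_n$, whose only possible limit is $\Phi_\mu^{z}$ by the limit-identification argument applied to the dual equation. Passing to the limit in the energy identity then yields $\|z_n\|_{\textnormal{L}^2}^2\to\|z\|_{\textnormal{L}^2}^2$, giving strong convergence of $(z_n)_n$; the convergence $\mu_n^{1/2}z_n\to\mu^{1/2}z$ in $\textnormal{L}^2(Q_T)$ follows since $\mu_n\to\mu$ in $\textnormal{L}^1$ (hence, along a further subsequence, a.e.) and $\mu_n$ is uniformly bounded below, so one concludes by dominated convergence together with the strong $\textnormal{L}^2$ convergence of $z_n$.
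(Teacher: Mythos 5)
Your overall architecture matches the paper's: uniform bounds from the duality estimate \eqref{ineq:z}, identification of the weak limit through uniqueness, and an upgrade to strong convergence by testing the formulation \eqref{eq:form:strong} with the solution itself and exploiting the compactness of the dual problem (Lemma~\ref{lem:comphi}). Your choice of test source $S=z_n$, which yields $\|z_n\|_{\textnormal{L}^2(Q_T)}\to\|z\|_{\textnormal{L}^2(Q_T)}$ directly, is a legitimate variant; the paper instead tests with $S=\mu_n z_n$ (for which the weighted estimates of Proposition~\ref{propo:duaphi} are exactly calibrated, since $\mu_n^{-1/2}S=\mu_n^{1/2}z_n$ is the uniformly bounded quantity), obtains $\|\mu_n^{1/2}z_n\|_{\textnormal{L}^2(Q_T)}\to\|\mu^{1/2}z\|_{\textnormal{L}^2(Q_T)}$ first, and only then deduces the strong convergence of $(z_n)_n$ via uniform integrability, a.e.\ convergence and Vitali's theorem, using the uniform lower bound on $\mu_n$.

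The genuine gap is your last step. You deduce $\mu_n^{1/2}z_n\to\mu^{1/2}z$ in $\textnormal{L}^2(Q_T)$ from the strong convergence of $(z_n)_n$ ``by dominated convergence''. But no uniform $\textnormal{L}^\infty(Q_T)$ bound on $(\mu_n)_n$ is assumed --- only the limit $\mu$ is bounded, and only a uniform \emph{lower} bound is imposed on the $\mu_n$ (see Remark~\ref{rem:conv}). To conclude you would need, for instance, $\int_{Q_T}\mu_n(z_n-z)^2\to 0$ and $\int_{Q_T}(\mu_n^{1/2}-\mu^{1/2})^2z^2\to 0$, and the $\textnormal{L}^1(Q_T)$ convergence of $(\mu_n)_n$ supplies no dominating function against the merely square-integrable $z$: the mass of $\mu_n-\mu$ can concentrate where $z$ is large. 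The implication is easy only in the other direction (from $\mu_n^{1/2}z_n$ to $z_n$, dividing by the uniform lower bound), which is precisely why the paper runs the argument that way around. The fix is to rerun your energy-identity step with $S=\mu_nz_n$ (admissible since each $\mu_n$ is bounded, and controlled uniformly because the relevant weighted norm $\|\mu_n^{-1/2}S\|_{\textnormal{L}^2(Q_T)}=\|\mu_n^{1/2}z_n\|_{\textnormal{L}^2(Q_T)}$ is bounded), recovering $\int_{Q_T}\mu_nz_n^2\to\int_{Q_T}\mu z^2$ and hence the strong convergence of $(\mu_n^{1/2}z_n)_n$ from weak convergence plus convergence of the norms.
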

\begin{proof}
Using estimate \eqref{ineq:z} of Theorem~\ref{thm:duaz}, we get first $(\mu_n^{1/2}z_n)_n \din\textnormal{L}^2(Q_T)$ and because $(\mu_n)_n$ is uniformly (in $n$) positively lower-bounded, we have in turn $(z_n)_n \din\textnormal{L}^2(Q_T)$. On the other hand, since $(\mu_n)_n\rightarrow \mu$ in $\textnormal{L}^1(Q_T)$ we have $(\mu_n^{1/2})_n \rightarrow \mu^{1/2}$ in $\textnormal{L}^2(Q_T)$ (use for instance that $x\mapsto\sqrt{x}$ is Lipschitz away from $0$). Omitting the extractions, we can thus write (using weak/strong convergence product for the two last lines)
\begin{align*}
(\mu_n^{1/2})_n &\rightarrow \mu^{1/2}, \text{ in }\textnormal{L}^2(Q_T),\\
(z_n)_n&\rightharpoonup z, \text{ in }\textnormal{L}^2(Q_T),\\
(\mu_n^{1/2}z_n)_n&\rightharpoonup \mu^{1/2}z, \text{ in }\textnormal{L}^2(Q_T),\\
(\mu_n z_n)_n&\rightharpoonup \mu z, \text{ in }\textnormal{L}^1(Q_T).
\end{align*}
Let us now notice that $(\mu_n^{1/2} z_n)_n\ddin \textnormal{L}^2(Q_T) \Rightarrow (z_n)_n\ddin\textnormal{L}^2(Q_T)$. Indeed, if (a subsequence of) $(\mu_n^{1/2} z_n)_n$ converges in $\textnormal{L}^2(Q_T)$, it is uniformly integrable and this property is transfered to (the corresponding subsequence of) $(z_n)_n$, because $\mu_n$ is uniformly positively lower-bounded. Up to an extraction we can assume that $(\mu_n)_n$ and $(\mu_n^{1/2}z_n)_n$ converge a.e. and since $(\mu_n)_n$ is uniformly positively lower-bounded, we get that $(z_n)_n$ converges a.e. : we recover convergence in $\textnormal{L}^2(Q_T)$ for $(z_n)_n$ thanks to Vitali's convergence Theorem. 

\vspace{2mm}

It is hence sufficient to prove that $(\mu_n^{1/2} z_n)_n \ddin\textnormal{L}^2(Q_T)$ and that $(z_n)_n$ have only one possible (strong) limit point in $\textnormal{L}^2(Q_T)$. For the latter fact, if $z$ is as above a weak limit point of $(z_n)_n$, then it is actually the dual solution of the following problem
\begin{align}
\label{eq:z1comp}\partial_t z - \Delta(\mu z ) &= G, \\
\label{eq:z2comp}z(0,\cdot) &= z^0.
\end{align}
Indeed, the above list of convergences (and more crucially the last one) allow to pass to the limit \eqref{eq:zn1} -- \eqref{eq:zn2} and prove that $z$ solves \eqref{eq:z1comp} -- \eqref{eq:z2comp} in the distributional sense, which, thanks to Theorem~\ref{thm:duaz}, is sufficient to identify completely $z\in\textnormal{L}^2(Q_T)$.

\medskip

So let us prove that $(\mu_n^{1/2} z_n)_n\ddin\textnormal{L}^2(Q_T)$. For all $n\in\N$, by definition of dual solutions, we have $z_n\in\textnormal{L}^2(Q_T)$, so that  $\mu_n z_n \in\textnormal{L}^2(Q_T)$ : we can thus consider $\Phi_n$ the unique solution given by Proposition \ref{propo:duaphi} of the following problem
\begin{align*}
\partial_t \Phi_n + \mu_n \Delta\Phi_n &= \mu_n z_n,\\
\Phi_n(T,\cdot) &= 0.
\end{align*}
Using first $(\mu_n)_n\din\textnormal{L}^1(Q_T)$ and $(\mu_n^{1/2} z_n)_n\din\textnormal{L}^2(Q_T)$, we  get by estimates \eqref{ineq:apriori1} -- \eqref{ineq:apriori2} of Proposition \ref{propo:duaphi} that  $(\Phi_n)_n\din\textnormal{L}^\infty(0,T;\H^1(\T^N))$ and $(\mu_n^{1/2} \Delta \Phi_n)_n \din\textnormal{L}^2(Q_T)$ so that we have in particular $(\Phi_n)_n\din\textnormal{L}^2(0,T;\H^2(\T^N))$. By assumption we have $(\mu_n)_n\ddin\textnormal{L}^1(Q_T)$, so that we infer from Lemma \ref{lem:comphi} that ${(\Phi_n)_n\ddin\mathscr{C}^0([0,T];\textnormal{L}^2(\T^N))}$, and also that $(\mu_n^{1/2}\Delta\Phi_n)_n\din\textnormal{L}^2(Q_T)$. The strong convergence of $(\mu_n)_n$ in $\textnormal{L}^1(Q_T)$ and the weak one of $(\mu_n z_n)_n$ towards $\mu z$ (see the list of convergences established above)  allow to identify a.e. the limit of each terms of the equation defining $\Phi_n$. In the same way, any weak limit point $\Phi$ of $(\Phi_n)_n$ in $\textnormal{L}^2(0,T;\H^2(\T^N))$ satisfies $\mu^{1/2}\Delta \Phi \in\textnormal{L}^2(Q_T)$. At the end of the day, there is only one possible cluster point for $(\Phi_n)_n$ in $\mathscr{C}^0([0,T]:\textnormal{L}^2(\T^N))$: it's the unique solution $\Phi$ given by Proposition \ref{propo:duaphi} of the problem (we use here that $\mu$ is assumed to be bounded)
\begin{align*}
\partial_t \Phi + \mu\Delta \Phi &= \mu z, \\
\Phi(T,\cdot)&=0.
\end{align*}
So in fact, we have that the whole sequence $(\Phi_n)_n$ converges to $\Phi$ in $\mathscr{C}^0([0,T];\textnormal{L}^2(\T^N))$. Now we use $\mu_n z_n$ as test function in the definition of $z_n$ as dual solution to \eqref{eq:zn1} -- \eqref{eq:zn2}. We thus have
\begin{align}
\label{eq:endcompduan}- \int_{Q_T} z_n^2\mu_n  = \int_{\T^N} z_n^0 \Phi_n(0)  + \int_{Q_T} \Phi_nG_n.
\end{align}
Similarly, the equality defining $z$ as dual solution of the limit equation (with $\mu^{1/2}z$ as test function) writes
\begin{align}
\label{eq:endcompduaz}- \int_{Q_T} z^2 \mu  = \int_{\T^N} z^0 \Phi(0)  + \int_{Q_T} \Phi\, G.
\end{align}
The r.h.s. of \eqref{eq:endcompduan} converges to the r.h.s. of \eqref{eq:endcompduaz} because of the convergences that we proved for $(\Phi_n)_n$. We thus obtain norm convergence of $(\mu_n^{1/2} z_n)_n$ towards $\mu^{1/2}z$ in $\textnormal{L}^2(Q_T)$ and the proof is over. $\qedhere$\end{proof}

\section{Proof of the main results}\label{sec:proofth}
\subsection{A compactness result for Kolmogorov equation}\label{subsec:comp}
\begin{proof}[Proof of Lemma~\ref{lem:comprz}]
Let us first notice that due to the assumptions, $z_n$ is in fact the dual solution (recall Definition~\ref{defi:dualsol}) of
\begin{align*}
\partial_t z_n - \Delta(\mu_n z_n) &= R_n z_n,\\
 z_n(0,\cdot) &=z^0_n.
\end{align*}
We cannot however directly use the stability result given by Proposition \ref{propo:comp} because nothing ensures that $(R_n z_n)_n\din\textnormal{L}^2(Q_T)$. In fact, at this stage we don't even have any kind of uniform bound on the sequence $(z_n)_n$, but this is solved easily with the help of Proposition~\ref{propo:sublin}. Indeed, the sequence $(R_n)_n$ is assumed to be uniformly upper-bounded say by a positive constant $\rho$. We thus infer from point $(ii)$ of Proposition \ref{propo:sublin} that $z_n \leq \widetilde{z}_n e^{\rho T}$ where $\widetilde{z}_n$ is the dual solution of 
\begin{align*}
\partial_t \widetilde{z}_n - \Delta(\mu_n \widetilde{z}_n) &= 0,\\
 z_n(0,\cdot) &=z^0_n.
\end{align*}
Now we can invoke Proposition \ref{propo:comp} for this last equation (because  $(z_n^0)_n\din\textnormal{L}^2(\T^N)$ and $(\mu_n)_n\ddin\textnormal{L}^1(Q_T)$ with a bounded cluster point) and get first that $(\mu_n\widetilde{z}_n)_n \ddin\textnormal{L}^2(Q_T)$. Since $z_n$ is assumed non-negative, estimate $z_n\leq \widetilde{z}_n e^{\rho T}$ ensures that $(z_n)_n\din\textnormal{L}^2(Q_T)$ and even more: $(z_n)_n$ is uniformly $\textnormal{L}^2(Q_T)$ integrable. Now, let us decompose (by uniqueness of dual solutions) $z_n = z_n^++z_n^-$, where $z_n^+$ and $z_n^-$ are respectively the dual solutions of the two following problems 
\begin{align*}
\partial_t z_n^+ - \Delta(\mu_n z_n^+) &= \rho z_n,\\
z_n^+(0,\cdot) &= z^0_n,
\end{align*}
and 
\begin{align*}
\partial_t z_n^- - \Delta(\mu_n z_n^-) &= (R_n-\rho) z_n,\\
z_n^-(0,\cdot) &= 0.
\end{align*}
Since $z_n$ and $z^0_n$ are non-negative and $R_n\leq \rho$ by assumption, we have $z_n^- \leq 0 \leq z_n^+$. We already noticed that $(z_n)_n\din\textnormal{L}^2(Q_T)$, so that Proposition \ref{propo:comp} ensures $(\mu_n z_n^+)_n\ddin\textnormal{L}^2(Q_T)$ which implies in particular (since $\mu_n$ is uniformly lower-bounded) $(z_n^+)_n \ddin\textnormal{L}^2(Q_T)$. To conclude the  proof it remains thus to prove $(z_n^-)_n \ddin\textnormal{L}^1(Q_T)$, because we already have uniform integrability in $\textnormal{L}^2(Q_T)$ of $(z_n)_n$: the strong compactness in $\textnormal{L}^2(Q_T)$ will then follow from Vitali's convergence Theorem.

\vspace{2mm}

Since $(R_n-\rho)_n \din\textnormal{L}^2(Q_T)$ and $(z_n)_n$ is uniformly $\textnormal{L}^2(Q_T)$ integrable, setting $P_n := (R_n-\rho)z_n$, the sequence $(P_n)_n$ is uniformly integrable, so that 
\begin{align}
\label{unifint}\sup_{n\in\N} \|\mathds{1}_{|P_n| > p} P_n\|_{\textnormal{L}^1(Q_T)} \operatorname*{\longrightarrow}_{p\rightarrow +\infty} 0.
\end{align}
Now, for any fixed $p\in\N$ we can decompose (again by uniqueness of dual solutions) $z_n^- = u_n^p + v_n^p$, where $u_n^p$ and $v_n^p$ are respectively the dual solutions of the two following problems 
\begin{align*}
\partial_t u_n^p - \Delta(\mu_n u_n^p) &= \mathds{1}_{|P_n|\leq p} P_n,\\
u_n^p(0,\cdot) &= 0,
\end{align*}
and 
\begin{align*}
\partial_t v_n^p - \Delta(\mu_n v_n^p) &= \mathds{1}_{|P_n|>p} P_n,\\
v_n^p(0,\cdot) &= 0.
\end{align*}
Thanks to Proposition \ref{propo:comp}, for any fixed $p\in\N$, $(u_n^p)_n\ddin\textnormal{L}^2(Q_T)$ (because the r.h.s. is uniformly bounded by $p$). On the other hand, we have the following estimate for $v_n^p$: using $1$ as test function in the definition of dual solution (one checks easily that $\Phi_{\mu_n}^1 = t-T$) we get 
\begin{align*}
-\langle v_n^p,1\rangle =  \langle t-T, \mathds{1}_{|P_n|>p} P_n\rangle.
\end{align*}
But since we are dealing with dual solutions, the vanishing initial data of $v_n^p$ and the inequality $P_n\leq 0$ (because $z_n$ is assumed non-negative) imply $v_n^p \leq 0$. We thus infer from the previous equality the following estimate (which amounts to formally integrate the equation satisfied by $v_n^p$ on $[0,t]\times\T^N$, and then another time on $[0,T]$)
\begin{align*}
\|v_n^p\|_{\textnormal{L}^1(Q_T)} \leq T \| \mathds{1}_{|P_n|>p} P_n\|_{\textnormal{L}^1(Q_T)}.
\end{align*}
Thanks to \eqref{unifint}, the previous inequality leads to 
\begin{align}
\label{unifconv}\sup_{n\in \N} \|v_n^p\|_{\textnormal{L}^1(Q_T)} \operatorname*{\longrightarrow}_{p\rightarrow +\infty} 0.
\end{align} 
Now, since $(u_n^p)_n\ddin\textnormal{L}^2(Q_T)$ for any fixed $p\in\N$, up to a diagonal extraction that we omit, we can thus assume that for all $p\in\N$, $(u_n^p)_n$ converges in $\textnormal{L}^2(Q_T)\hookrightarrow\textnormal{L}^1(Q_T)$. Writing, for any $m,n,p\in\N$ 
\begin{align*}
\|z_n^--z_m^-\|_{\textnormal{L}^1(Q_T)} \leq \|v_n^p\|_{\textnormal{L}^1(Q_T)} + \|u_n^p-u_m^p\|_{\textnormal{L}^1(Q_T)} + \|v_m^p\|_{\textnormal{L}^1(Q_T)},
\end{align*}
an using the uniform convergence \eqref{unifconv}, we get that $(z_n^-)_n$ is a Cauchy sequence in $\textnormal{L}^1(Q_T)$, so that it converges in this space. $\qedhere$
\end{proof}
\subsection{From non-local to classical cross-diffusion systems}\label{subsec:conv}
\begin{proof}[Proof of Theorem~\ref{thm:nonloctoclas}]
For convenience, we will sometimes write $u_{i,n}$ instead of $u_i^n$ (to avoid  overloading the exponents) and we introduce the following notations for $n\in\N$ and $i\in\{1,\dots,I\}$: $\tilde{u}_{i,n} =u_{i,n}\star\rho_i^n$, $U_n=(u_{i,n})_{1\leq i\leq I}$, $\widetilde{U}_n=(\tilde{u}_{i,n})_{1\leq i\leq I}$,  and (for $i<I$) $\tilde{a}_{i,n} = a_i(\,\cdot\,,\tilde{u}_{i+1,n},\dots,\tilde{u}_{n,I})$  so that the relaxed system in the statement of Theorem~\ref{thm:nonloctoclas} writes simply
\begin{empheq}[left = \empheqlbrace]{align*}
&\partial_t u_{1,n} - \Delta[\tilde{a}_{1,n} u_{1,n}] = r_1(\widetilde{U}_n)u_{1,n},\\
&\partial_t u_{2,n} - \Delta[\tilde{a}_{2,n} u_{2,n}] = r_2(\widetilde{U}_n)u_{2,n}, \\
&\hspace{2mm}\vdots\\
&\partial_t u_{I,n} - \Delta[a_I u_{I,n}] = r_I(\widetilde{U}_n)u_{I,n}.
\end{empheq}
The first thing to notice is that each function $u_{i,n}$ is a dual solution (recall Definition~\ref{defi:dualsol}) of the corresponding $i$th equation that it solves. Indeed, we have  $U_n\in\textnormal{L}^2(Q_T)$ so that $\widetilde{U}_n\in\textnormal{L}^2(Q_T)$ and since $r_i$ is sub-affine for all $i$, we have also $r_i(\widetilde{U}_n)\in\textnormal{L}^2(Q_T)$, using that $\tilde{a}_{i,n}$ is bounded we eventually infer $\partial_t U_n \in\textnormal{L}^1(0,T;\H^{-m}(\T^N)$ for some large integer $m\in\N$. The regularity of the kernels $\rho_i^n$ thus gives $\widetilde{U}_n\in\mathscr{C}^0([0,T];\mathscr{C}^\infty(\T^N))$ and in particular $\widetilde{U}_n\in\textnormal{L}^\infty(Q_T)$ (without any uniform bound, though). Since $r_i$ is continuous, we have also $r_i(U_n)\in\textnormal{L}^\infty(Q_T)$, using again that $U_n\in\textnormal{L}^2(Q_T)$ we recover that all the right-hand sides belong to $\textnormal{L}^2(Q_T)$. 

\vspace{2mm} 

Now that we obtained that $u_{i,n}$ is a dual solution, and since the functions $r_i$ are all upper-bounded, we infer from Proposition \ref{propo:sublin} the existence of a positive constant $C_T$ such that, for all $i$ (recall that $u_{i,n}$ is non-negative by assumption), 
\begin{align}
\label{ineq:uinvin}0 \leq u_{i,n} \leq C_T v_{i,n},
\end{align}
 where for each $i\in\{1,\dots,I-1\}$ the function $v_{i,n}$ is the dual solution of 
\begin{align}
\label{eq:vin}\partial_t v_{i,n} - \Delta[\tilde{a}_{i,n} v_{i,n}] &= 0,\\
\label{eq:vin0}v_i^n(0,\cdot) &= u_i^0
\end{align}
and likewise $v_{I,n}$ is the dual solution of 
\begin{align*}
\partial_t v_{I,n} - \Delta[a_I\, v_{I,n}] &= 0,\\
v_{I,n}(0,\cdot) &= u_I^0.
\end{align*}
Since all the functions $a_i$ are assumed bounded, we directly infer from estimate \eqref{ineq:z} of Theorem~\ref{thm:duaz} that for all $i\in \{1,\dots,I-1\}$, $(v_{i,n})\din\textnormal{L}^2(Q_T)$, and thus $(u_{i,n})_n \din\textnormal{L}^2(Q_T)$. 

\vspace{2mm}

Now, we are going to propagate strong convergence from the last (less coupled) equation up to the first one using at each step Lemma \ref{lem:comprz}. We start with the case $i=I$, that is
\begin{align*}
\partial_t u_{I,n} - \Delta[a_I u_{I,n}] = r_I(U_n)u_{I,n}.
\end{align*}
We already proved that $(U_n)_n\din\textnormal{L}^2(Q_T)$. Since $r_I$ is sub-affine, we have also $(r_I(U_n))_n\din\textnormal{L}^2(Q_T)$. Since $a_I\in\textnormal{L}^\infty(Q_T)$ does not depend on $n$, we have obviously $a_I\ddin\textnormal{L}^1(Q_T)$ and Lemma \ref{lem:comprz} applies in an easy way, because $u_{I,n}$ is non-negative and belongs to $\textnormal{L}^2(Q_T)$. We therefore have that $(u_{I,n})_n\ddin\textnormal{L}^2(Q_T)$, which is the starting point of our backward induction: we assume now that $(u_{i,n})_n\ddin\textnormal{L}^2(Q_T)$ for $i\in\{j+1,\dots,I\}$ and we will prove that $(u_{j,n})_n\ddin\textnormal{L}^2(Q_T)$. W.l.o.g. we can thus assume that $(u_{i,n})_n$ converges to $u_i$ in $\textnormal{L}^2(Q_T)$ for $i\in\{j+1,\dots,I\}$. Since $(\rho_i^n)_n$ approaches the Dirac mass, it is an easy exercise to check that for $i\in\{j+1,\dots,I\}$ we have also that $(u_{i,n}\star\rho_i^n)_n$ converges in $\textnormal{L}^2(Q_T)$ to $u_i$ (for instance: the previous sequence is $\textnormal{L}^2(Q_T)$ compact by the Riesz-Fréchet-Kolmogorov criterion, and only one limit point is possible thanks to Dirac mass approximation). Since $a_j$ is continuous and depends only on the $u_{i,n}\star\rho_i^n$ for $i\geq j+1$, we have that (up to a subsequence) $\tilde{a}_j^n$ converges almost everywhere to $a_j(\,\cdot\,,u_{j+1},\dots,u_I)$ and since $\tilde{a}_j^n$ is bounded by $\|a_j\|_\infty$, the convergence actually holds in $\textnormal{L}^1(Q_T)$ for $\tilde{a}_j^n$. As before, since $r_j$ is sub-affine, we have $(r_j(U_n))_n\din\textnormal{L}^2(Q_T)$ and we invoke Lemma \ref{lem:comprz} to get $(u_{j,n})_n \ddin\textnormal{L}^2(Q_T)$.  

\vspace{2mm} 

To conclude we now have to exploit the previous convergences to pass to the limit in each nonlinear terms of the relaxed system. Thanks to the continuity of the functions $a_i$ and $r_i$, we have a.e. convergence in each nonlinearities, so only concentration issues can occur. Since the functions $a_i$ are bounded, the nonlinearities lying inside the laplacian are harmless. As for the reaction terms, we established $(r_i(U_n))_n\din\textnormal{L}^2(Q_T)$ so that up to a subsequence we have weak convergence in $\textnormal{L}^2(Q_T)$ towards a limit which is identified thanks to the a.e. convergence of the $(u_{i,n})_n$ towards $u_i$ and the continuity of $r_i$: $(r_i(U_n))_n\rightharpoonup r_i(U)$, where $U=(u_i)_{1\leq i\leq I}$. This weak convergence is facing the strong one of $(u_i^n)_n$, so that indeed $(r_i(U_n)u_i)_n\rightharpoonup r_i(U)u_i$ for all $i\in\{1,\dots,I\}$. $\qedhere$
\end{proof}
\subsection{An existence result for non-local triangular systems}\label{subsec:exi}
\begin{proof}[Proof of Theorem~\ref{thm:exi}]
When $a_i,r_i$ and $u^{in}_i$ are smooth, with $a_i$ bounded from above and below the existence of a unique non-negative (smooth) solution to this triangular system is a consequence of Theorem 3.1 and Lemma 6.3 of \cite{melfont}. One could also follow a PDE approach as this is done in \cite{LPR}, after adapting the analysis to the convolution operator and adding the reaction terms (which are harmelss for their method). The hard part of the job is of course to handle the low regularity case. We proceed by approximation and replace $a_i$, $r_i$ and $u^{in}_i$ by sequences of smooth functions $(a_{i,n})_n$, $(r_{i,n})_n$ and $(u^{in}_n)_n$ approaching them, locally uniformly for the two first ones, in $\textnormal{L}^2(\T^N)$ for the last one. We also demand that $(a_{i,n})_n$ and $(r_{i,n})_n$ satisfy, uniformly in $n$, the pointwise assumptions that we have for $a_i$ and $r_i$ : upper and lower bounds, sub-affineness. If $(u_{i,n})_{1\leq i\leq I}$ is the corresponding family solution of the regularized non-local triangular system, since $u_{i,n}$ is smooth, it is in particular a dual solution of the corresponding $i$th equation that it solves (recall Definition~\ref{defi:dualsol}). The very same reasoning that we used in the proof of Theorem~\ref{thm:nonloctoclas} applies, even in a simpler way : Proposition \ref{propo:comp} is sufficient to recover strong compactness. Indeed, since $(u_{i,n})_n$ is bounded in $\textnormal{L}^2(Q_T)$, the equation that it solves implies that $(\partial_t u_{i,n})_n$ is bounded in $\textnormal{L}^1(0,T;\H^{-m}(\T^N))$ for some large integer $m\in\N$ (this part is identical to the proof of Theorem~\ref{thm:nonloctoclas}). The regularity of the kernels $\rho_i$ (which here are fixed independenty of $n$) ensures thus that for all $i$, $(u_{i,n}\star\rho_i)_n \din\textnormal{L}^\infty(Q_T)$. Since $r_{i,n}$ approaches the continuous function $r_i$ locally uniformly, we have in particular that $(r_{i,n}(u_{1,n}\star\rho_1,\cdots,u_{I,n}\star\rho_I))_n\din\textnormal{L}^\infty(Q_T)$ and one can then use directly Proposition \ref{propo:comp} to get $(u_{i,n})_n\ddin\textnormal{L}^2(Q_T)$ for all $i$ and conclude as we did for Theorem~\ref{thm:nonloctoclas}. $\qedhere$
\end{proof}
\section{Comments}\label{sec:com}
\subsection{A (seemingly) simple open problem for the dual equation}\label{subsec:open}
We go back here to Proposition~\ref{propo:duaphi}. As only the $\textnormal{L}^1(Q_T)$ norm of $\mu$ is used in estimate \eqref{ineq:apriori2}, it is natural to ask whether this proposition still holds when $\mu$ is positively lower-bounded and (only) integrable. The existence part is not problematic.  What we are interested herein is how much one can weaken the assumption on $\mu$ so as to keep the uniqueness for $\Phi$, because the latter property is at the core of all the results of the current paper. When one tries to reproduce directly the proof in that case, the natural constraint that appears for the source term $S$ is $\mu^{-1/2}S\in\textnormal{L}^2(Q_T)$ and we get then the following functional framework for $\Phi$: $\textnormal{L}^\infty(0,T;\H^1(\T^N))\cap\textnormal{L}^2(0,T;\H^2(\T^N))\cap\mathscr{C}^0([0,T];\textnormal{L}^2(\T^N))$ with furthermore $\mu^{1/2}\Delta\Phi \in\textnormal{L}^2(Q_T)$ and $\mu^{-1/2}\partial_t \Phi\in\textnormal{L}^2(Q_T)$. Note that the two last belongings are not equivalent to $\Delta\Phi\in\textnormal{L}^2(Q_T)$ and $\partial_t \Phi\in\textnormal{L}^2(Q_T)$, when $\mu$ is only bounded from below. The proof of the \emph{a priori} estimate \eqref{ineq:apriori1} in Proposition~\ref{propo:duaphi} becomes then problematic. The issue comes in when one tries to establish the following inequality (which was true for the class of solutions considered in Proposition~\ref{propo:duaphi}):
\begin{align}
\label{ineq:prob} \frac{1}{2}\|\nabla \Phi\|_{\textnormal{L}^\infty(0,T;\textnormal{L}^2(\T^N))}^2 \leq \int_{t}^T \int_{\T^N} \partial_t \Phi\Delta \Phi\,.
\end{align}
To prove it, one should replace $\Phi$ by an appropriate approximation, a sequence of regular functions $(\Phi_k)_k$ such that in particular $(\mu^{-1/2}\partial_t \Phi_k,\mu^{1/2}\Delta\Phi_k)$ approaches $(\mu^{-1/2}\partial_t \Phi,\mu^{1/2}\Delta\Phi)$ in $\textnormal{L}^2(Q_T)\times\textnormal{L}^2(Q_T)$. The existence of such a sequence is related to the approximation problem ``$H=W$'' for weighted Sobolev spaces, whence the interrogation : \emph{are smooth functions always dense in weighted Sobolev spaces} ? This is an intricate question which unfortunately cannot always be answered positively (see \cite{zhikov1998} for a simple counterexample and the more recent \cite{zhikov2013} for refined sufficient conditions). This issue is somehow related to the method we used to prove Theorem~\ref{thm:duaz}, which needed the density of $\big\{ \mu^{-1/2}\partial_t \ffi +\mu^{1/2}\Delta \ffi\,:\,\ffi\in\mathscr{D}(Q_T)\big\}$ in $\textnormal{L}^2(Q_T)$. Without uniqueness for the dual problem it is not clear at all that the formulation \eqref{eq:form:strong} is equivalent to  \eqref{eq:def:dist}. Conversely, with such an approximation property at hand for the associated weighted Sobolev spaces, the aforementioned density is clear. 

\medskip 

To sum up, after several attempts we did not manage to answer to the following question.

\begin{question}\label{ques}
Fix $\mu\in\textnormal{L}^1(Q_T)$ positively lower-bounded and consider $\Phi\in\textnormal{L}^\infty(0,T;\H^1(\T^N))\cap\textnormal{L}^2(0,T;\H^2(\T^N))\cap\mathscr{C}^0([0,T];\textnormal{L}^2(\T^N))$ such that $\mu^{-1/2}\partial_t \Phi\in\textnormal{L}^2(Q_T)$, $\mu^{1/2}\Delta \Phi\in\textnormal{L}^2(Q_T)$ and solving
\begin{align*}
\Phi(T,\cdot)&=0,\\
\partial_t\Phi+\mu\Delta\Phi&=0.
\end{align*}
 Do we have $\Phi=0$ ?
\end{question}
As explained above, our analysis rests on the well-posedness for the dual problem. The more we extend the class of diffusion coefficients for which the answer to the previous question is positive, the more we recover cases for which our compactness routine will apply (see next paragraph). Here are some cases in which the previous uniqueness result holds.
\begin{lem}\label{lem:ques}
In the four following cases the answer to Question \ref{ques} is positive:
\begin{itemize}
\item[(i)] $\mu$ is bounded ;
\item[(ii)] $\mu$ depends only on $x$ ;
\item[(iii)] $\mu$ depends only on $t$ ;
\item[(iv)] $\mu$ satisfies the $A_2$-Muckenhoupt condition ;
\end{itemize}
\end{lem}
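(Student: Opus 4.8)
The plan is to reduce, in all four cases, the uniqueness claim to the single energy identity behind \eqref{ineq:prob}, and then to justify that identity case by case through an approximation of $\Phi$ by smooth functions. Indeed, once the integration by parts in time is known to be legitimate, multiplying $\partial_t\Phi+\mu\Delta\Phi=0$ by $\Delta\Phi$ and integrating over $[t,T]\times\T^N$ yields
\begin{align*}
\tfrac12\|\nabla\Phi(t)\|_{\textnormal{L}^2(\T^N)}^2 + \|\mu^{1/2}\Delta\Phi\|_{\textnormal{L}^2([t,T]\times\T^N)}^2 = \tfrac12\|\nabla\Phi(T)\|_{\textnormal{L}^2(\T^N)}^2 = 0,
\end{align*}
the right-hand side vanishing because of the terminal condition $\Phi(T,\cdot)=0$ (a point ensured in the limiting argument below by using a one-sided, backward mollifier near $t=T$). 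Hence $\nabla\Phi\equiv 0$, so $\Phi$ is spatially constant on the connected torus; feeding $\Delta\Phi=0$ back into the equation gives $\partial_t\Phi=0$, and the terminal condition then forces $\Phi=0$. The whole difficulty is therefore concentrated in the identity $\int_{\T^N}\partial_t\Phi\,\Delta\Phi=-\tfrac12\tfrac{\dd}{\dd t}\|\nabla\Phi\|_{\textnormal{L}^2(\T^N)}^2$, i.e. in producing smooth $(\Phi_k)_k$ with $\mu^{-1/2}\partial_t\Phi_k\to\mu^{-1/2}\partial_t\Phi$ and $\mu^{1/2}\Delta\Phi_k\to\mu^{1/2}\Delta\Phi$ in $\textnormal{L}^2(Q_T)$ --- precisely the ``$H=W$'' issue discussed above.

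For case $(i)$, since $\mu$ is bounded the weights $\mu^{\pm1/2}$ stay bounded away from $0$ and $\infty$, so $\partial_t\Phi,\Delta\Phi\in\textnormal{L}^2(Q_T)$ with norms equivalent to the weighted ones; the approximation is then plain convolution, exactly as in the proof of Proposition~\ref{propo:duaphi}, and nothing more is needed. For cases $(ii)$ and $(iii)$ the guiding idea is to mollify $\Phi$ \emph{only} in the variable on which $\mu$ does not depend, so that the weights $\mu^{\pm1/2}$ commute with the regularizing convolution. When $\mu=\mu(x)$ I would set $\Phi_k=\Phi\star_t\eta_k$ (convolution in time): then $\mu^{-1/2}\partial_t\Phi_k=(\mu^{-1/2}\partial_t\Phi)\star_t\eta_k$ and $\mu^{1/2}\Delta\Phi_k=(\mu^{1/2}\Delta\Phi)\star_t\eta_k$ converge in $\textnormal{L}^2(Q_T)$, while $\Phi_k$ is smooth in time with values in $\H^2(\T^N)$, which makes the time integration by parts licit for each fixed $k$. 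Symmetrically, when $\mu=\mu(t)$ I would mollify in space, $\Phi_k=\Phi\star_x\eta_k$, the weight again commuting; alternatively, the change of time variable $s(t)=\int_0^t\mu$ (well defined since $\mu\in\textnormal{L}^1$ and positive) turns the equation into the genuine backward heat equation $\partial_s\Phi+\Delta\Phi=0$ with $\partial_s\Phi,\Delta\Phi\in\textnormal{L}^2$, reducing $(iii)$ directly to case $(i)$. In each instance one passes to the limit in the energy identity written for $\Phi_k$.

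The only genuinely hard case is $(iv)$, and that is where I expect the main obstacle. When $\mu$ satisfies the $A_2$-Muckenhoupt condition, both $\mu$ and $\mu^{-1}$ are $A_2$ weights, and the maximal function is bounded on the weighted spaces $\textnormal{L}^2(\mu\,\dd x\,\dd t)$ and $\textnormal{L}^2(\mu^{-1}\,\dd x\,\dd t)$. This is exactly the setting in which the Meyers--Serrin type density theorem (``$H=W$'') for weighted Sobolev spaces holds: smooth functions are dense, so a standard $(t,x)$-convolution $\Phi_k=\Phi\star\eta_k$ already satisfies $\mu^{-1/2}\partial_t\Phi_k\to\mu^{-1/2}\partial_t\Phi$ and $\mu^{1/2}\Delta\Phi_k\to\mu^{1/2}\Delta\Phi$ in $\textnormal{L}^2(Q_T)$ simultaneously (see the sufficient conditions in \cite{zhikov2013}). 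With this approximation in hand, the energy identity and hence $\Phi=0$ follow as before. The delicate point --- and the reason $(iv)$ is isolated from the elementary cases --- is precisely that outside such weight classes this density may fail, which is the content of the open Question~\ref{ques}.
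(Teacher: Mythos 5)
Your proposal is correct in overall architecture and coincides with the paper on cases $(i)$ and $(iii)$ (space mollification; your time-change $s(t)=\int_0^t\mu$ is a nice extra reduction of $(iii)$ to $(i)$), but it diverges on $(ii)$ and is under-detailed on $(iv)$. For $(ii)$ the paper does not go through the energy identity with multiplier $\Delta\Phi$ at all: it multiplies the equation by $-\mu^{-1}\Phi$, i.e.\ works with $\Psi=\mu^{-1/2}\Phi\in\W^{1,2}(0,T;\textnormal{L}^2(\T^N))$, so that $\int_{Q_T}\Psi\,\partial_t\Psi=-\tfrac12\|\Psi(0)\|^2_{\textnormal{L}^2(\T^N)}$ needs no mollification and the spatial integration by parts gives $\tfrac12\|\Psi(0)\|^2+\|\nabla\Phi\|^2_{\textnormal{L}^2(Q_T)}=0$ directly; your time-mollification route (the weight $\mu(x)$ commutes with $\star_t$) also works, at the price of handling the endpoint $t=T$ by extension/one-sided mollifiers, so the paper's argument is the more economical one here. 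For $(iv)$ you have correctly identified the mechanism ($A_2\Rightarrow$ the maximal operator is bounded on both $\textnormal{L}^2_\mu$ and $\textnormal{L}^2_{1/\mu}$), but you then outsource the crucial step to a cited density theorem, whereas the paper proves it: it convolves \emph{in space only}, writes the defect $S_\ep=\partial_t\Psi_\ep+\mu\Delta\Psi_\ep$, dominates $|\partial_t\Psi_\ep|\lesssim M|\partial_t\Phi|$ and $|\Delta\Psi_\ep|\lesssim M|\Delta\Phi|$, and concludes $S_\ep\to0$ in $\textnormal{L}^2_{1/\mu}(Q_T)$ by dominated convergence. Your choice of a full $(t,x)$-convolution is the one genuinely shaky point: the $A_2$ condition and the maximal operator of the Appendix are purely spatial (balls $B\subset\T^N$), so the pointwise domination of a space--time mollification by a maximal function bounded on $\textnormal{L}^2_{1/\mu}$ is not available when $\mu$ also depends on $t$; convolving in $x$ only both restores the domination and preserves the terminal condition $\Phi(T,\cdot)=0$ for free, removing the need for your backward mollifier. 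With that modification your argument closes, and the conclusion from the energy identity ($\nabla\Phi\equiv0$, hence $\Delta\Phi=0$, hence $\partial_t\Phi=0$, hence $\Phi=0$) is the same as the paper's.
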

\begin{remark}
For the definition of the $A_2$-Muckenhoupt condition, see the Appendix Section \ref{sec:app}.
\end{remark}
\begin{proof}
Case $(i)$ was known at least since \cite{campanato}. In the current manuscript, it is treated in Proposition \ref{propo:duaphi} (and is in fact a particular instance of case $(iv)$). For case $(ii)$, we have $\Psi:=\mu^{-1/2}\Phi\in \textnormal{W}^{1,2}(0,T;\textnormal{L}^2(\T^N))$ which is sufficient to prove that (we use $\Psi(T,\cdot)=0$) 
\begin{align*}
 \int_{Q_T} \Psi \,\partial_t\Psi = -\frac{1}{2}\|\Psi(0)\|_{\textnormal{L}^2(\T^N)}^2.
\end{align*}
In particular multiplying the equation satisfied by $\Phi$ by $-\mu^{-1}\Phi$, we get (since $\Phi\in\textnormal{L}^2(0,T;\H^2(\T^N))$) and integrating over $Q_T$
\begin{align*}
\frac{1}{2}\|\Psi(0)\|_{\textnormal{L}^2(\T^N)}^2 + \|\nabla \Phi\|_{\textnormal{L}^2(Q_T)}^2 = 0,
\end{align*}
from which we easily get $\Phi=0$. For case $(iii)$ we introduce a mollifier sequence $(\eta_\ep)_\ep$ with respect to the space variable and define (convolution with respect to $x$ only) $\Psi_\ep := \Phi\star\eta_\ep$. Since $\partial_t\Phi\in\textnormal{L}^1(Q_T)$ and $\Phi\in\mathscr{C}^0([0,T];\textnormal{L}^2(\T^N))$, we infer $\partial_t \Psi_\ep \in\textnormal{L}^1(0,T;\mathscr{C}^\infty(\T^N))$ and $\Psi_\ep \in\mathscr{C}^0([0,T];\mathscr{C}^\infty(\T^N))$ ; again this is sufficient to write (note that $\Psi_\ep(0,\cdot)=0$ because the convolution is in space only) 
\begin{align}
\label{eq:psiep}\int_{Q_T} \Delta \Psi_\ep\, \partial_t\Psi_\ep = \frac{1}{2}\|\nabla \Psi_\ep(0)\|_{\textnormal{L}^2(\T^N)}^2.
\end{align}
Since $\Psi_\ep$ solves the same equation as $\Phi$ (because $\mu$ does not depend on $x$), multiplying it by $\Delta \Psi_\ep$ and integrating over $Q_T$ we get 
\begin{align*}
\frac{1}{2}\|\Psi_\ep(0)\|_{\textnormal{L}^2(\T^N)}^2 + \|\mu^{1/2}\Delta \Psi_\ep\|_{\textnormal{L}^2(Q_T)}^2 = 0,
\end{align*}
which again leads easily to $\Psi_\ep=0$. Since $(\Psi_\ep)_\ep$ converges to $\Phi$, we infer $\Phi=0$.

\vspace{2mm}

For the last case $(iv)$ recall that the $A_2$-Muckenhoupt condition characterizes those weights $\nu$ such that the  maximal operator $M$ is continuous from $\textnormal{L}^2(Q_T;\nu\,\dd x\,\dd t)$ to itself (see the Appendix Section \ref{sec:app}). As we did in case $(iii)$ we introduce $\Psi_\ep:=\Phi\star\eta_\ep$ (convolution in $x$ only) which again satisfies \eqref{eq:psiep}. But $\Psi_\ep$ does not satisfy the same equation as $\Phi$. We introduce the weighted space $\textnormal{L}^2_\mu(Q_T):=\textnormal{L}^2(Q_T;\mu\, \dd x \,\dd t)$ and similarly $\textnormal{L}^2_{1/\mu}(Q_T)$ and define
\begin{align*}
S_\ep := \partial_t \Psi_\ep + \mu \Delta \Psi_\ep.
\end{align*}
We claim that $(S_\ep)_\ep$ converges to $0$ in $\textnormal{L}^2_{1/\mu}(Q_T)$. We have more precisely that $(\partial_t \Psi_\ep)_\ep$ and $(\mu\,\Delta\Psi_\ep)_\ep$ converge respectively to $\partial_t \Phi$ and $\mu\,\Delta\Phi$ in $\textnormal{L}^2_{1/\mu}(Q_T)$. Note that the second convergence is equivalent to $(\Delta\Psi_\ep)_\ep\rightarrow\Delta\Phi$ in $\textnormal{L}^2_\mu(Q_T)$. First, by standard properties of convolution we have that $(\partial_t\Psi_\ep,\Delta\Psi_\ep)_\ep\rightarrow (\partial_t\Psi,\Delta\Psi)$ almost everywhere. The maximal function allows to control ponctually an approximate convolution by means of the function itself (see the Appendix Section \ref{sec:app}), more precisely up to a universal constant we have for all $\ep>0$, $|\Psi_\ep|\lesssim M|\Phi|$. And since $\partial_t\Phi\in\textnormal{L}^1(Q_T)$ and $\Delta\Phi\in\textnormal{L}^2(Q_T)$, we have also (with the same constant), $|\partial_t \Psi_\ep|\lesssim M|\partial_t \Phi|$ and $|\Delta\Psi_\ep|\lesssim  M|\Delta\Phi|$. Since the maximal operator is continuous from $\textnormal{L}^2_{1/\mu}(Q_T)$ to itself and $\textnormal{L}^2_{\mu}(Q_T)$ to itself, we have that $\mu^{-1/2}M|\partial_t\Phi|\in\textnormal{L}^2(Q_T)$ and $\mu^{1/2}M|\Delta\Phi|\in\textnormal{L}^2(Q_T)$, in particular these two functions can be used to apply Lebesgue's dominated convergence theorem to prove that $(\partial_t \Psi_\ep)_\ep\rightarrow \partial_t\Phi$ and $(\Delta \Psi_\ep)_\ep\rightarrow \Delta\Phi$ respectively in $\textnormal{L}^2_{1/\mu}(Q_T)$ and $\textnormal{L}^2_\mu(Q_T)$. This proves in particular that $(S_\ep)_\ep$ converges to $0$ in $\textnormal{L}^2_{1/\mu}(Q_T)$, and since $(\Delta\Psi_\ep)_\ep$ is bounded (because it converges) in $\textnormal{L}^2_\mu(Q_T)$, we have that $(\Delta \Psi_\ep \,S_\ep)_\ep\rightarrow 0$ in $\textnormal{L}^1(Q_T)$. Multiplying 
\begin{align}
\label{eq:ep}\partial_t\Psi_\ep + \mu\Delta\Psi_\ep = S_\ep,
\end{align}
by $\Delta\Psi_\ep$ and integrating over $Q_T$ we get, thanks to \eqref{eq:psiep}
\begin{align*}
\frac{1}{2}\|\nabla \Psi_\ep(0)\|_{\textnormal{L}^2(\T^N)}^2 + \|\Delta\Psi_\ep\|_{\textnormal{L}^2_\mu(Q_T)}^2 = \int_{Q_T} \Delta \Psi_\ep\, S_\ep \operatorname*{\longrightarrow}_{\ep\rightarrow 0} 0,
\end{align*}
so we get in particular that $(\Delta \Psi_\ep)_\ep$ converges to $0$ in $\textnormal{L}^2_\mu(Q_T)$, and because of \eqref{eq:ep} we have the same behavior for $(\partial_t \Psi_\ep)_\ep$ in $\textnormal{L}^2_{1/\mu}(Q_T)$ : we get $\partial_t \Phi= \Delta\Phi=0$, so $\Phi=0$. $\qedhere$
\end{proof}
\subsection{Perspectives}
Using Lemma \ref{lem:ques} it is possible to strengthen the compactness result Lemma \ref{lem:comprz}. All the properties of dual solutions actually hold as soon as the diffusion functions $\mu$ or $\mu_n$ belong to an ``admissible class'' for which the dual problem have a unique solution (as examples are given in Lemma \ref{lem:ques}) ; the constraint of boundedness for the diffusion coefficient is completely artificial and is only here to ensure that the dual equation is well-posed. As implied by Remark \ref{rem:obs}, one could generalize Theorem~\ref{thm:nonloctoclas} replacing the $a_i$ functions by (global) integral operators of the time or space variable, and invoke points $(ii)$ or $(iii)$ of Lemma \ref{lem:ques} to recover well-posedness of the dual problem. Points $(iv)$ of Lemma \ref{lem:ques} could be of interest if one manages to prove additional estimates on solution to triangular SKT systems, which would imply the $A_2$-Muckenhoupt condition ; however this trail is a bit speculative for the moment. The Grail would be the generalization of Lemma \ref{lem:comprz} to the case of integrable diffusion coefficients $\mu_n$ and $\mu$, which amounts to answer to Question \ref{ques} positively, a task in which we did not manage to succeed. The situation is a bit frustrating: the current form of Lemma \ref{lem:comprz} does not exploit the full power of the duality estimate which is very useful to avoid concentration issues, especially for the terms inside the laplacian operator ; with a bounded diffusion coefficient this feature is unfortunately not used and in fact, the concentration in the reaction terms can be direclty avoided in this case using the improved duality estimates obtained in \cite{dualimpro}, see for instance \cite{DesTres,ariane}, where this strategy is used. In its current form our compactness result is mainly useful to control oscillations, allowing the transference of a.e. convergence from the diffusion coefficient to the solution of the equation itself, without the help of parabolic estimates (which necessitates regularity of the coefficient and the inital data).
\section{Appendix}\label{sec:app}
\begin{definition}
A positive function $\nu\in\textnormal{L}^1(Q_T)$ is said to satisfy the $A_2$-Muckenhoupt condition if 
\begin{align*}
\sup_B \left(\fint_B \nu\right)\left(\fint_B \frac{1}{\nu}\right) <\infty,
\end{align*} 
where the supremum runs on all balls $B\subset\T^N$ and for $f\in\textnormal{L}^1(B)$ 
\begin{align*}\fint_B f = \frac{1}{|B|}\int_B f.\end{align*}
\end{definition}
\begin{definition}
For any $f\in\textnormal{L}^1(\T^N)$ the associated maximal function, denoted $Mf$, is defined by
\begin{align*}
Mf(x) := \sup_{r>0}\fint_{B_r(x)} |f|,
\end{align*}
where $B_r(x)$ is the ball of radius $r$ centered at $x$. The maximal operator is then the linear map defined by $f\mapsto M f$.
\end{definition}
\begin{lem}
Fix an approximate identity $(\rho_\ep)_\ep$ on $\T^N$. There exists $A>0$ such that, for any $p\in[1,\infty]$ and any $f\in\textnormal{L}^p(\T^N)$, $|f\star\rho_\ep| \leq A M|f|$ almost everywhere.
\end{lem}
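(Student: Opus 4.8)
The plan is to recognize this as the classical pointwise domination of an approximate identity by the Hardy--Littlewood maximal operator. First I would fix the model form of the mollifier: on $\T^N$ an approximate identity is (the periodization of) $\rho_\ep(x)=\ep^{-N}\rho(x/\ep)$ for a fixed integrable profile $\rho$ with $\int\rho=1$, and for the mollifiers used in the proof of Lemma~\ref{lem:ques} one may take $\rho$ nonnegative and compactly supported. For $\ep$ small the support of $\rho_\ep$ is contained in a small ball, so the periodic and Euclidean pictures coincide and no wrap-around occurs in the estimates below; this lets me work as if on $\R^N$.

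Next I would pass to the least radially decreasing majorant. Set $\bar\rho(x):=\sup_{|y|\geq|x|}|\rho(y)|$; since $\rho$ is bounded and compactly supported, $\bar\rho$ is radial, nonincreasing in $|x|$, and integrable, and $\|\bar\rho_\ep\|_{\textnormal{L}^1}=\|\bar\rho\|_{\textnormal{L}^1}$ by scaling. As $|\rho_\ep|\leq\bar\rho_\ep$ pointwise we have $|f\star\rho_\ep|\leq|f|\star\bar\rho_\ep$, so it suffices to prove the pointwise inequality
\[
(g\star\psi)(x)\leq\|\psi\|_{\textnormal{L}^1}\,Mg(x)
\]
for every nonnegative $g\in\textnormal{L}^1(\T^N)$ and every nonnegative radial nonincreasing $\psi\in\textnormal{L}^1$, applied to $g=|f|$ and $\psi=\bar\rho_\ep$; the constant will then be $A:=\|\bar\rho\|_{\textnormal{L}^1}$, independent of $\ep$ and of $p$ (the exponent $p$ only serves to make $M|f|$ finite almost everywhere).

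The core step is a layer-cake decomposition. Writing $\psi(x)=\psi_0(|x|)$ with $\psi_0$ nonincreasing, I would use the distributional representation $\psi(y)=\int_0^\infty\mathds{1}_{B_s}(y)\,d\lambda(s)$ for a suitable nonnegative measure $\lambda$ on $(0,\infty)$, where $B_s$ is the ball of radius $s$ centered at the origin. Fubini's theorem then gives
\begin{align*}
(g\star\psi)(x)&=\int_0^\infty\Big(\int_{B_s(x)}g\Big)\,d\lambda(s)=\int_0^\infty|B_s|\Big(\fint_{B_s(x)}g\Big)\,d\lambda(s)\\
&\leq Mg(x)\int_0^\infty|B_s|\,d\lambda(s)=\|\psi\|_{\textnormal{L}^1}\,Mg(x),
\end{align*}
where the final equality is again Fubini applied to $\int_{\T^N}\psi$. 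Taking $g=|f|$, $\psi=\bar\rho_\ep$ and recalling $M|f|=Mg$ yields $|f\star\rho_\ep|\leq\|\bar\rho\|_{\textnormal{L}^1}\,M|f|$ almost everywhere, uniformly in $\ep$.

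I expect the only genuine obstacle to be the reduction in the second paragraph: the pointwise bound truly depends on the approximate identity admitting, after rescaling, a common integrable radially decreasing majorant. This is automatic for the compactly supported mollifiers relevant here, but it is precisely the hypothesis that makes the uniform constant $A$ exist; once it is in place the layer-cake computation above is routine.
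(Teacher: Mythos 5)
Your proof is correct: the paper itself gives no argument for this lemma and simply cites Stein (Chapter~III, \S2), and your reduction to the least radially decreasing majorant followed by the layer-cake representation and Fubini is exactly the standard proof found in that reference, with the uniform constant $A=\|\bar\rho\|_{\textnormal{L}^1}$ coming out correctly. Your closing caveat is also well placed: the lemma as stated tacitly assumes the approximate identity is of dilation type (or at least admits a common integrable radially decreasing majorant after rescaling), which is the hypothesis under which the cited theorem applies and which holds for the mollifiers actually used in the paper.
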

\begin{proof}
See \cite{stein}, Chapter III, Section 2, Theroem 1, p.63. $\qedhere$
\end{proof}
\begin{thm}[Muckenhoupt]
  Consider $\nu\in\textnormal{L}^1(Q_T)$ and let $\textnormal{L}^2_\nu(Q_T):=\textnormal{L}^2(Q_T;\nu\,\dd x\,\dd t)$. Then the maximal operator $M$ is continuous from $\textnormal{L}_\nu^2(Q_T)$ to itself if and only if $\nu$ satisfies the $A_2$-Muckenhoupt condition.
\end{thm}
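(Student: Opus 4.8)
This is the classical weighted norm inequality of Muckenhoupt, valid on any doubling metric measure space and in particular on $\T^N$ (or on $Q_T$ with its product structure); the Euclidean proof transfers \emph{verbatim}, so I only indicate the architecture. The plan is to treat the two implications separately, the reverse one being essentially immediate and the direct one requiring a self-improvement of the $A_2$ condition.

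For the \emph{necessity}, the idea is a pure testing argument. Fix a ball $B$ and, to avoid dividing by a possibly infinite average, test the boundedness of $M$ against $f_k:=\min(\nu^{-1},k)\mathds{1}_B$. For $x\in B$ one has the pointwise bound $Mf_k(x)\gtrsim\fint_B f_k$, so that $\big(\fint_B f_k\big)^2\nu(B)\lesssim\int_B (Mf_k)^2\,\nu\leq C\int_B f_k^2\,\nu$; after simplifying the averages (using $\min(\nu^{-1},k)^2\nu\leq\min(\nu^{-1},k)$) and letting $k\to\infty$ one is left exactly with $\big(\fint_B\nu\big)\big(\fint_B\nu^{-1}\big)\leq C$, uniformly in $B$, which is the $A_2$ condition.

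The \emph{sufficiency} is the substantial direction and I would split it into three steps. \textbf{Step 1 (weak type).} Cover the level set $\{Mf>\lambda\}$ by a disjoint family of balls $B_j$ with $\fint_{B_j}|f|>\lambda$, supplied by the Vitali covering lemma, and use that $A_2$ weights are doubling to reduce to estimating $\sum_j\nu(B_j)$. Cauchy--Schwarz gives $\big(\fint_{B_j}|f|\big)^2\leq\big(\fint_{B_j}|f|^2\nu\big)\big(\fint_{B_j}\nu^{-1}\big)$; multiplying by $\nu(B_j)$ and absorbing $\big(\fint_{B_j}\nu\big)\big(\fint_{B_j}\nu^{-1}\big)$ by the $A_2$ bound collapses the right-hand side to $C\int_{B_j}|f|^2\nu$, and summation over the disjoint $B_j$ yields the weak-type inequality $\lambda^2\,\nu(\{Mf>\lambda\})\leq C\,\|f\|_{\textnormal{L}^2_\nu(Q_T)}^2$. \textbf{Step 2 (self-improvement).} I would then show the openness of the $A_2$ class, namely $\nu\in A_2\Rightarrow\nu\in A_{2-\delta}$ for some $\delta>0$. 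The engine is the reverse Hölder inequality $\big(\fint_B\nu^{-(1+\eta)}\big)^{1/(1+\eta)}\leq C\fint_B\nu^{-1}$ for $A_2$ weights; choosing $\eta=\delta/(1-\delta)$ turns the $A_{2-\delta}$-average $\big(\fint_B\nu^{-1/(1-\delta)}\big)^{1-\delta}$ into something $\leq C\fint_B\nu^{-1}$, and combined with $A_2$ this gives $\big(\fint_B\nu\big)\big(\fint_B\nu^{-1/(1-\delta)}\big)^{1-\delta}\leq C$, i.e. $\nu\in A_{2-\delta}$. \textbf{Step 3 (interpolation).} Applying Step 1 with the exponent $2-\delta$ gives a weak-type $(2-\delta,2-\delta)$ bound for $M$ on $\textnormal{L}^{2-\delta}_\nu(Q_T)$; since $M$ is trivially of strong type $(\infty,\infty)$, the Marcinkiewicz interpolation theorem applied to the fixed finite measure $\nu\,\dd x\,\dd t$ upgrades these to the strong-type $(2,2)$ bound, which is precisely the continuity of $M$ on $\textnormal{L}^2_\nu(Q_T)$.

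The only genuinely non-elementary ingredient, and hence the step I expect to be the main obstacle, is the reverse Hölder inequality underlying Step 2; everything else is covering lemmas, Hölder's inequality and standard interpolation. Its proof proceeds through a Calderón--Zygmund stopping-time decomposition adapted to the weight, and it is precisely there that one exploits the quantitative force of the $A_2$ condition — rather than the mere doubling property — to control how the weight redistributes its mass across dyadic scales.
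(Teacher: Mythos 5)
The paper does not actually prove this statement: its ``proof'' consists of the single line ``See \cite{muck}'', a citation of Muckenhoupt's original article. So there is no internal argument to compare yours against; what your proposal can be measured against is the classical proof, and as an outline of that proof it is correct. The necessity via testing $M$ on $f_k=\min(\nu^{-1},k)\mathds{1}_B$ (with the truncation precisely to avoid an infinite average, and the pointwise identity $\min(\nu^{-1},k)^2\nu\leq\min(\nu^{-1},k)$ doing the simplification) is the standard argument and is complete as written. The sufficiency you describe is the Coifman--Fefferman route rather than Muckenhoupt's original one: weak type $(2,2)$ from Vitali covering, Cauchy--Schwarz and the doubling of $A_2$ weights; openness $A_2\Rightarrow A_{2-\delta}$ via the reverse H\"older inequality applied to $\nu^{-1}$ (you use implicitly that $\nu\in A_2\iff\nu^{-1}\in A_2$, which is immediate from the symmetry of the condition and is worth stating); and Marcinkiewicz interpolation against the trivial $(\infty,\infty)$ bound. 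All of this transfers to $\T^N$ since Lebesgue measure there is doubling and radii of balls are bounded by the diameter, so the covering lemmas apply without the usual localization fuss. You correctly isolate the reverse H\"older inequality as the one genuinely non-elementary ingredient and indicate the right mechanism (a Calder\'on--Zygmund stopping-time decomposition); leaving it as a cited black box is no worse than what the paper does for the whole theorem. The only point I would press you on is one the paper itself glosses over: the weight $\nu$ lives on $Q_T$ while the balls, the maximal operator and the $A_2$ averages are taken over $\T^N$ only, so both the hypothesis and the conclusion must be read slicewise in $t$ with uniform constants; your parenthetical about the product structure gestures at this, but if you want the statement to be literally the one used in the proof of Lemma~\ref{lem:ques}$(iv)$ (where $M$ acts in $x$ for each fixed $t$ and one integrates the resulting bound in time), you should say so explicitly.
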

\begin{proof}
See \cite{muck}.$\qedhere$
\end{proof}
\vspace{2mm}
{\bf{Acknowledgement}}: 
The author would like to thank Thomas Lepoutre for several fruitful discussions. The research leading to this paper was funded by the french "ANR blanche" project Kibord: ANR-13-BS01-0004.
\bibliographystyle{plain}
\bibliography{nonloc2crosstribound} 
\end{document}